\documentclass[a4paper,10pt]{article}
\usepackage[utf8]{inputenc}

\usepackage{cmap}
\usepackage[T1]{fontenc}

\usepackage{tikz}

\usepackage{subcaption}

\usepackage{tocloft}

\usepackage[english]{babel}
\usepackage{amsfonts}
\usepackage{amssymb, amsthm}
\usepackage{xcolor,graphicx}
\usepackage{marginnote}
\usepackage{amsmath}
\usepackage{a4wide}
\usepackage[affil-it]{authblk}

\usepackage{epstopdf}
\newcommand{\doi}[1]{\href{http://dx.doi.org/#1}{doi:#1}}

\usepackage{titlesec}
\titlelabel{\thetitle.\quad}

\usepackage[labelsep=period]{caption}

\usepackage{enumitem}

\usepackage{crossreftools}
\makeatletter
\newcommand{\optionaldesc}[2]{%
	\phantomsection
	#1\protected@edef\@currentlabel{#1}\label{#2}%
}
\makeatother

\numberwithin{equation}{section}

\let\OLDthebibliography\thebibliography
\renewcommand\thebibliography[1]{
	\OLDthebibliography{#1}
	\setlength{\parskip}{1pt}
	\setlength{\itemsep}{1pt plus 0.3ex}
}

\usepackage[colorlinks=true,linktocpage,pdfpagelabels,
bookmarksnumbered,bookmarksopen]{hyperref}
\definecolor{ForestGreen}{rgb}{0.1,0.6,0.05}
\definecolor{EgyptBlue}{rgb}{0.063,0.1,0.6}
\hypersetup{
	colorlinks=true,
	linkcolor=EgyptBlue,         
	citecolor=ForestGreen,
	urlcolor=olive
}

\usepackage[hyperpageref]{backref}

\allowdisplaybreaks
\usepackage{mathtools}
\mathtoolsset{showonlyrefs}

\usepackage{orcidlink}

\usepackage{accents}

\def\Wo{W_0^{1,p}(\Omega)}
\def\We{W_0^{1,p}}
\def\W{W^{1,p}(\mathbb{R}^N)}
\def\XO{\Wo}

\newtheorem{theorem}{Theorem}[section]
\newtheorem{lemma}[theorem]{Lemma}

\theoremstyle{definition}
\newtheorem{remark}[theorem]{Remark}

\title{\vspace*{-5ex}
On the nodal set conjecture for the $p$-Laplacian in circularly symmetric domains
}

\author{Vladimir Bobkov\\}
\date{}

\AtEndDocument{%
	\par
	\medskip
	\bigskip
	\begin{tabular}{@{}l@{}}%
		(V.~Bobkov)\\[0.2em]
		\textsc{Institute of Mathematics, Ufa Federal Research Centre, RAS}\\
		\textsc{Chernyshevsky str. 112, 450008 Ufa, Russia}\\[0.3em]
		\orcidlink{0000-0002-4425-0218} 0000-0002-4425-0218\\[0.3em]
		\textit{E-mail address}: \texttt{bobkov@matem.anrb.ru, bobkovve@gmail.com}\\[1.5em]
\end{tabular}}

\begin{document}
	\maketitle
	\vspace*{-5ex}
	\begin{abstract}
		
		In 1990, P\"utter shown that the nodal line of any second eigenfunction of the Dirichlet Laplacian on a planar bounded simply connected domain $\Omega$ intersects the boundary $\partial\Omega$ provided $\Omega$ has the circular symmetry. By adopting the method of moving polarization, we establish similar information on the nodal set of second eigenfunctions of the Dirichlet $p$-Laplacian on circularly symmetric domains in arbitrary higher dimension. 
		
		\par
		\smallskip
		\noindent {\bf  Keywords}: 
		$p$-Laplacian; 
		second eigenfunctions; 
		Payne conjecture; 
		nodal set; 
		polarization;
		circular symmetry;
		cap symmetrization.
		
		\noindent {\bf MSC2010}: 
		35J92,	
		35B06,	
		49K30.	
	\end{abstract}

	\section{Introduction}\label{sec:intro}

	Let $\Omega$ be a bounded domain in $\mathbb{R}^N$, $N \geqslant 2$, and let $p \in (1,+\infty)$.
	Consider the eigenvalue problem
	\begin{equation}\label{eq:D}
		\tag{$\mathcal{D}$}
		\left\{
		\begin{aligned}
			-\Delta_p u &= \lambda |u|^{p-2} u 
			&&\text{in } \Omega, \\
			u &= 0  &&\text{on } \partial \Omega,
		\end{aligned}
		\right.
	\end{equation}
where $\Delta_p u$ is the $p$-Laplacian. 
This problem is understood in the weak sense: a parameter $\lambda \in \mathbb{R}$ and a function $u \in \Wo \setminus \{0\}$ are called \textit{eigenvalue} and \textit{eigenfunction} of \eqref{eq:D}, respectively, provided
\begin{equation}\label{eq:D:weak}
\int_\Omega |\nabla u|^{p-2} \langle \nabla u, \nabla \phi \rangle \,d\mathbf{x}
= 
\lambda \int_\Omega |u|^{p-2} u \phi \,d\mathbf{x}
\quad \text{for any}~ \phi \in \Wo.
\end{equation}
Hereinafter, we use a bold single-character notation $\mathbf{x}$, $\mathbf{y}$, etc., for points from $\mathbb{R}^N$. 
Employing the Lusternik-Shnirelman theory, it can be shown that \eqref{eq:D} admits an infinite sequence of eigenvalues 
\begin{equation}\label{eq:lambdas}
0< \lambda_1 < \lambda_2 \leq \dots \leq \lambda_k \to +\infty
\quad \text{as}~ k \to +\infty,
\end{equation}
where $\lambda_1$ and $\lambda_2$ are the least eigenvalues of \eqref{eq:D}, see, e.g., \cite{drabekrobinson}, and we refer to \cite{anane1987,CuestaFucik} for the second inequality. 
Any eigenfunction $u$ of \eqref{eq:D} belongs to $C^{1}(\Omega)$, see, e.g., \cite{Dib}. 
Moreover, if for some $\mathbf{x} \in \partial \Omega$ and $\varepsilon>0$, $\Omega \cap B_\varepsilon(\mathbf{x})$ is of class $C^{1,\delta}$ for some $\delta \in (0,1)$, then $u \in C^{1}(\Omega \cup (\partial \Omega \cap B_\varepsilon(\mathbf{x})))$, as it follows from \cite[Section~3]{lieberman}. 
Hereinafter, $B_\varepsilon(\mathbf{x})$ stands for the open $N$-ball of radius $\varepsilon$ centered at $\mathbf{x}$.

While general qualitative properties of the first eigenfunction are relatively well understood, information on the behavior of higher eigenfunctions is more obscure, except that any higher eigenfunction $u$ is sign-changing (see, e.g., \cite{kawohl-lind}), that is, $u^\pm \not\equiv 0$. 
In what follows, we always denote 
$$
u^+ = \max\{u,0\} 
\quad
\text{and} 
\quad
u^- = \min\{u,0\},
$$ 
so that $u = u^+ + u^-$. 
We are interested in qualitative properties of \textit{second} eigenfunctions and note the following convenient characterization of the second eigenvalue $\lambda_2$, see \cite[Proposition~4.2]{Bob1} (or \cite[Lemma~3.1]{brasco-franzina1}) and \cite[p.~1165]{BK1}.
\begin{lemma}\label{lem:characterization:lambda}
		Let
		\begin{equation}\label{eq:lambda2-char}
			\mu_2 
			:=
			\inf 
			\left\{
			\max
			\left\{
			\frac{\int_\Omega |\nabla v^+|^p \,d\mathbf{x}}{\int_\Omega |v^+|^p \, d\mathbf{x}},
			\frac{\int_\Omega |\nabla v^-|^p \,d\mathbf{x}}{\int_\Omega |v^-|^p \, d\mathbf{x}}
			\right\}:~
			v \in \XO, ~ v^\pm \not\equiv 0
			\right\}.
		\end{equation}
		Then $\lambda_2 = \mu_2$, and for any minimizer $v$ of $\mu_2$ there exist $\alpha,\beta>0$ such that $\alpha v^+ + \beta v^-$ is a second eigenfunction of \eqref{eq:D}. 
\end{lemma}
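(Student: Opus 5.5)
The argument has three parts. First we show $\mu_2 \le \lambda_2$ by testing with a second eigenfunction. Then we show $\lambda_2 \le \mu_2$ using the mountain-pass characterization of $\lambda_2$ due to Cuesta, de Figueiredo and Gossez. Finally, we show that minimizers yield eigenfunctions through a one-sided variation argument.

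\emph{Step 1: $\mu_2 \le \lambda_2$.} Let $u$ be a second eigenfunction; it changes sign. Test the weak formulation \eqref{eq:D:weak} with $\phi = u^+$ and with $\phi = u^-$; both belong to $\XO$. Since $\nabla u^\pm = \nabla u \cdot \chi_{\{\pm u>0\}}$ a.e., this gives
\begin{equation}
\int_\Omega |\nabla u^\pm|^p \,d\mathbf{x} = \lambda_2 \int_\Omega |u^\pm|^p \,d\mathbf{x}.
\end{equation}
Both quotients in \eqref{eq:lambda2-char} therefore equal $\lambda_2$, so $\mu_2 \le \lambda_2$.

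\emph{Step 2: $\lambda_2 \le \mu_2$.} We use the known characterization
\begin{equation}
\lambda_2 = \inf_{\gamma\in\Gamma}\max_{t\in[0,1]} \int_\Omega |\nabla \gamma(t)|^p \,d\mathbf{x},
\end{equation}
where $\Gamma$ is the set of continuous paths on the $L^p$-unit sphere of $\XO$ joining $\varphi_1$ to $-\varphi_1$. Take $v$ with $v^\pm\not\equiv0$ and normalize $v^\pm$ separately in $L^p$. Because $v^+$ and $v^-$ have disjoint supports, for every $(a,b)\neq(0,0)$ the function $w = a v^+ + b v^-$ satisfies
\begin{equation}
\frac{\int_\Omega |\nabla w|^p \,d\mathbf{x}}{\int_\Omega |w|^p \,d\mathbf{x}} = \frac{|a|^p R_+ + |b|^p R_-}{|a|^p+|b|^p} \le \max\{R_+,R_-\},
\end{equation}
where $R_\pm$ are the two Rayleigh quotients of $v^\pm$. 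Thus the normalized arc $\{a v^+ + b v^-\}$ running from $v^+$ through $v^-$ to $-v^+$ lies in $\{\int|\nabla w|^p \le \max\{R_+,R_-\}\}$. It remains to join $\pm v^+$ to $\pm\varphi_1$ without raising the energy above $\max\{R_+,R_-\}$. Here we invoke the connectedness property from Cuesta--de Figueiredo--Gossez: sublevel sets above $\lambda_1$ contain a path from a nonnegative function to $\varphi_1$ (via the hidden convexity of $\int|\nabla u^{1/p}|^p$, i.e.\ a path of the form $((1-t)|v^+|^p+t\varphi_1^p)^{1/p}$, whose energy is at most $R_+$). Closing up the path gives $\lambda_2 \le \max\{R_+,R_-\}$, and taking the infimum over $v$ yields $\lambda_2\le\mu_2$.

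\emph{Step 3: minimizers give eigenfunctions.} Let $v$ be a minimizer. By Steps 1--2, both $R_\pm = \lambda_2$: if one were strictly smaller, the same path construction would produce an admissible path with maximum below $\lambda_2$ (using that $\lambda_1$ is isolated to handle the degenerate case), a contradiction. Hence $v^\pm$ are each minimizers of the Rayleigh quotient among functions with the same sign supported in $\{\pm v>0\}$. One-sided variations $v^+ + \varepsilon\phi$ with $\phi\ge0$ show that $-\Delta_p v^+ \le \lambda_2 (v^+)^{p-1}$ holds in a weak sense. To obtain the full equation on $\Omega$ for $w=\alpha v^+ + \beta v^-$, we argue by the deformation argument. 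If $w$ were not critical for $\int_\Omega|\nabla w|^p$ on the $L^p$-sphere for any $\alpha,\beta>0$, we could deform the two-dimensional arc of Step 2 slightly below level $\lambda_2$, using a pseudo-gradient flow, which is compact since the arc is compact. This contradicts the minimax characterization, so some $w$ on the arc is critical at level $\lambda_2$. The endpoints $\pm v^\pm$ are excluded because they are constant sign, and constant-sign eigenfunctions only occur at $\lambda_1$. Thus $\alpha,\beta>0$.

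\textbf{Main obstacle.} The hardest point is this final deformation step: producing a critical point on the specific arc $\{\alpha v^+ + \beta v^-\}$ rather than somewhere nearby. We handle it with a quantitative deformation lemma applied near the compact arc, relying on the Palais--Smale condition.
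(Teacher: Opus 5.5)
The paper does not prove this lemma; it only cites \cite{Bob1,brasco-franzina1,BK1}. Your Steps 1 and 2 are the standard route to $\lambda_2=\mu_2$ and are fine: test with $u^\pm$, then close the arc $\{a\hat v^+ + b\hat v^-\}$, with $\hat v^\pm=v^\pm/\|v^\pm\|_p$, into an admissible path for the minimax of \cite{CuestaFucik} using two hidden-convexity segments.

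\textbf{The gap is in Step 3.} You assert that a minimizer has $R_+=R_-=\lambda_2$, because otherwise ``the same path construction would produce an admissible path with maximum below $\lambda_2$''. This is false. If, say, $R_+<R_-=\lambda_2$, the path still has maximum exactly $\max\{R_+,R_-\}=\lambda_2$, attained at $\hat v^-$. So no contradiction with the minimax arises, and the isolation of $\lambda_1$ does not change this. The two claims that follow are also unsupported:
\begin{itemize}
\item Minimality of $v$ does not force $v^\pm$ to minimize the Rayleigh quotient among functions supported in $\{\pm v>0\}$. Improving $v^+$ does not lower $\max\{R_+,R_-\}$ when $R_-=\lambda_2$.
\item The one-sided variations $v^++\varepsilon\phi$, $\phi\ge 0$, are not backed by any minimality property you have established.
\end{itemize}
These sentences should be removed.

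\textbf{How to repair it.} None of that is needed, because the deformation step, done carefully, yields everything, including $R_+=R_-$.

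Let $\gamma$ be the admissible path $\varphi_1\to\hat v^+\to\hat v^-\to-\varphi_1$, which is optimal. Suppose $\gamma$ contained no critical point of $\int_\Omega|\nabla\cdot|^p$ on the $L^p$-sphere at level $\lambda_2$. The critical set $K_{\lambda_2}$ is compact by (PS) and $\gamma$ is compact, so you can choose a neighborhood of $K_{\lambda_2}$ disjoint from $\gamma$. The deformation lemma on the sphere fixes $\pm\varphi_1$, since they sit at level $\lambda_1<\lambda_2$, and pushes $\gamma$ strictly below $\lambda_2$, contradicting the minimax. Both (PS) and this deformation lemma are available in \cite{CuestaFucik}. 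This is exactly what resolves your ``main obstacle'': the critical point lies on $\gamma$ itself.

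Such a critical point $w$ is an eigenfunction with eigenvalue $\lambda_2>\lambda_1$, hence sign-changing \cite{kawohl-lind}.
\begin{itemize}
\item On the hidden-convexity segments the energy is at most $(1-t)R_\pm+t\lambda_1<\lambda_2$ for $t>0$, so $w$ lies on the arc.
\item Being sign-changing, $w=a\hat v^+ + b\hat v^-$ with $a,b>0$.
\item On your longer arc through $\hat v^-$ to $-\hat v^+$, the whole portion with $a<0<b$ consists of nonpositive functions and must be excluded as well, not just the endpoints. Using the shorter arc avoids this.
\end{itemize}
Finally, $a^pR_+ + b^pR_- = \lambda_2 = \max\{R_+,R_-\}$ with $a^p+b^p=1$ and $a,b>0$ forces $R_+=R_-=\lambda_2$ a posteriori. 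With Step 3 rewritten this way, your proof is complete.
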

Any second eigenfunction $u$ being continuous and sign-changing, we define its \textit{nodal set} as
\begin{equation*}
	\mathcal{Z}(u) = \overline{\{x \in \Omega:~ u(x) = 0\}}.
\end{equation*}
The aim of the present work is to describe a class of domains $\Omega$ for which the nodal set of $u$ intersects the boundary, that is, 
\begin{equation}\label{eq:dist0}
	\mathrm{dist}(\mathcal{Z}(u), \partial \Omega) = 0.
\end{equation}

In the linear case $p=2$, the validity of \eqref{eq:dist0} was conjectured by Payne in 1967, see \cite[Conjecture~5]{payne1967}. 
In general, Payne's conjecture (also known as the nodal set conjecture) is not true: there do exist domains whose second eigenfunction $u$ satisfies $\mathrm{dist}(\mathcal{Z}(u), \partial \Omega) > 0$. 
The first such example in the planar case was constructed in \cite{HoffmannOstenhof}, see also \cite{DGSH,freitas-krej1} and a recent breakthrough work \cite{freitas-leylekian}. For a higher-dimensional counterexample, we refer to \cite{fournais}, as well as to \cite{kennedy,mukh-saha} for subsequent developments. 

The results of \cite{kennedy,mukh-saha} indicate that, in the higher-dimensional, a counterexample to Payne's conjecture can be found in any \textit{topological} class. In the planar case, in view of \cite{freitas-leylekian}, an example violating \eqref{eq:dist0} can be found in the class of multiply connected domains with any fixed number of holes, while the validity of \eqref{eq:dist0} for planar simply connected domains remains an open problem. 

In the present work, we are interested in \textit{geometric} conditions on $\Omega$ for which Payne's conjecture is true. 
The first result in this regard was established by Payne himself in \cite{payne1973}, where he covered planar domains which are symmetric with respect to a line and convex in the perpendicular direction. 
In other words, Payne considered Steiner symmetric domains, see Remark~\ref{rem:sym} below. 
A higher-dimensional extension of this result was established by Damascelli \cite{damascelliNodal}.
P\"utter \cite{putter} proved the validity of \eqref{eq:dist0} for planar domains with circular symmetry (see the definition below) and under a few additional technical assumptions. 
Apparently, essentially the same result was later reestablished by Yang \& Guo in \cite{YG}\footnote{It is interesting to note that neither Payne nor P\"utter and Yang \& Guo used the terms ``Steiner symmetry'' and ``circular symmetry'', respectively, although both notions already existed at the time of appearance of \cite{payne1973,putter,YG}.}. 
Both proofs originate from the idea of \cite{payne1973} and essentially based on the linearity of the problem. 

We also refer to \cite{allesandrini,melas}, where \eqref{eq:dist0} was proved for planar convex domains. 
Planar doubly connected domains with certain symmetries were considered in \cite{kiwan,sarswat}. 
In the case of arbitrary dimension, a class of long and thin convex domains was covered in \cite{jerison1}, 
thin tubular neighborhoods of curves and hypersurfaces were studied in \cite{FK,krej-tusek}, dumbbell type domains with narrow connectors were investigated in \cite{mukh-saha}.

In the nonlinear case $p>1$, the validity of \eqref{eq:dist0} was proved in \cite{BK1} for Steiner symmetric domains, and this result was further extended to nonlinear nonlocal settings in \cite{BK2x}.  
We are not aware of other general results on the nodal set conjecture for the nonlinear problem \eqref{eq:D}.

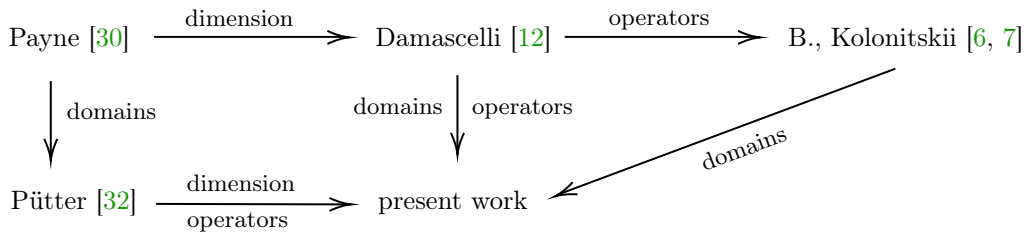
\begin{figure}[!ht]
\centering
\tikzset{every picture/.style={line width=0.75pt}} 

\begin{tikzpicture}[x=0.75pt,y=0.75pt,yscale=-1,xscale=1]

\draw    (130,41) -- (225.42,41) ;
\draw [shift={(227.42,41.25)}, rotate = 180.] [color={rgb, 255:red, 0; green, 0; blue, 0 }  ][line width=0.75]    (10.93,-3.29) .. controls (6.95,-1.4) and (3.31,-0.3) .. (0,0) .. controls (3.31,0.3) and (6.95,1.4) .. (10.93,3.29)   ;
\draw    (335,41) -- (430.42,41) ;
\draw [shift={(432.42,41.25)}, rotate = 180.] [color={rgb, 255:red, 0; green, 0; blue, 0 }  ][line width=0.75]    (10.93,-3.29) .. controls (6.95,-1.4) and (3.31,-0.3) .. (0,0) .. controls (3.31,0.3) and (6.95,1.4) .. (10.93,3.29)   ;
\draw    (78.42,63.25) -- (78.42,101.25) ;
\draw [shift={(78.42,103.25)}, rotate = 270] [color={rgb, 255:red, 0; green, 0; blue, 0 }  ][line width=0.75]    (10.93,-3.29) .. controls (6.95,-1.4) and (3.31,-0.3) .. (0,0) .. controls (3.31,0.3) and (6.95,1.4) .. (10.93,3.29)   ;
\draw    (281.42,60.25) -- (281.42,98.25) ;
\draw [shift={(281.42,100.25)}, rotate = 270] [color={rgb, 255:red, 0; green, 0; blue, 0 }  ][line width=0.75]    (10.93,-3.29) .. controls (6.95,-1.4) and (3.31,-0.3) .. (0,0) .. controls (3.31,0.3) and (6.95,1.4) .. (10.93,3.29)   ;
\draw    (131,125) -- (226.42,125.25) ;
\draw [shift={(228.42,125.25)}, rotate = 180] [color={rgb, 255:red, 0; green, 0; blue, 0 }  ][line width=0.75]    (10.93,-3.29) .. controls (6.95,-1.4) and (3.31,-0.3) .. (0,0) .. controls (3.31,0.3) and (6.95,1.4) .. (10.93,3.29)   ;
\draw    (500,57) -- (331,120.52) ;
\draw [shift={(330,121.25)}, rotate = 338] [color={rgb, 255:red, 0; green, 0; blue, 0 }  ][line width=0.75]    (10.93,-3.29) .. controls (6.95,-1.4) and (3.31,-0.3) .. (0,0) .. controls (3.31,0.3) and (6.95,1.4) .. (10.93,3.29)   ;

\draw (56,34) node [anchor=north west][inner sep=0.75pt]   [align=left] {Payne \cite{payne1973}};
\draw (239,34) node [anchor=north west][inner sep=0.75pt]   [align=left] {Damascelli \cite{damascelliNodal}};
\draw (445,34) node [anchor=north west][inner sep=0.75pt]   [align=left] {B., Kolonitskii \cite{BK1,BK2x}};
\draw (57,116) node [anchor=north west][inner sep=0.75pt]   [align=left] {P\"utter \cite{putter}};
\draw (240.4,117) node [anchor=north west][inner sep=0.75pt]   [align=left] {present work};
\draw (144,26) node [anchor=north west][inner sep=0.75pt]  [font=\small] [align=left] {dimension};
\draw (144.8,108.6) node [anchor=north west][inner sep=0.75pt]  [font=\small] [align=left] {dimension};
\draw (145,127.8) node [anchor=north west][inner sep=0.75pt]  [font=\small] [align=left] {operators};
\draw (85,72.6) node [anchor=north west][inner sep=0.75pt]  [font=\small] [align=left] {domains};
\draw (228,70.2) node [anchor=north west][inner sep=0.75pt]  [font=\small] [align=left] {domains};
\draw (287.2,71) node [anchor=north west][inner sep=0.75pt]  [font=\small] [align=left] {operators};
\draw (355.4,26) node [anchor=north west][inner sep=0.75pt]  [font=\small] [align=left] {operators};
\draw (400,100) node [anchor=north west][inner sep=0.75pt]  [font=\small,rotate=-339] [align=left] {domains};

\end{tikzpicture}
\caption{Extensions of \cite{payne1973} for domains with reflection symmetry.}
\label{fig:refs}
\end{figure}

In the present work, we extend the results of \cite{putter,YG} on the validity of \eqref{eq:dist0} to arbitrary dimension, weaker regularity assumptions on $\Omega$, and nonlinear settings, by generalizing the moving polarization approach from \cite{BK1}, see Figure~\ref{fig:refs} for a schematic diagram. 
The polarization (equivalently, two-point rearrangement) is a symmetrization method which, being applied to a function or set, produces a new function or set with ``better'' properties, see Section~\ref{section:aux}. It found a variety of applications in the qualitative theory, see, e.g., \cite{baernstain,BartschWethWillem,brocksol,dubinin}, and its significant feature is independence from the linearity of the problem.

\medskip
Let us introduce a few notations. 
We decompose $\mathbb{R}^N = \mathbb{R}^2 \times \mathbb{R}^{N-2}$ and occasionally write a point $\mathbf{x} 
\in \mathbb{R}^N$ as $(x,y;z)$ with $(x,y) \in \mathbb{R}^2$ and $z \in \mathbb{R}^{N-2}$. 
For given $r>0$, $a \in\mathbb{R}$, and $\xi \in \mathbb{R}^{N-2}$, consider the (two-dimensional) circle of radius $r$ lying in the $(x,y)$-plane in $\mathbb{R}^N$ and centered at $(a,0;\xi)$:
$$
S_r^{2}(a,0;\xi) = \{(x,y;\xi) \in \mathbb{R}^N:~ (x-a)^2+y^2 = r^2\}.
$$
Let us also consider the half-hyperplane 
$$
H_a^{h} = \{(x,y;z) \in \mathbb{R}^N:~ x>a,~ y=0\}.
$$
We say that $\Omega$ is \textit{circularly symmetric} with respect to $H_{a}^{h}$ if for any $r>0$ and $\xi \in \mathbb{R}^{N-2}$ the intersection $\Omega \cap S_r^{2}(a,0;\xi)$ is either empty, or the full circle $S_r^{2}(a,0;\xi)$, or an arc of $S_r^{2}(a,0;\xi)$ whose middle point is $(a+r,0;\xi)$, cf.\ Figure~\ref{fig0}.

\begin{figure}[!ht]

\centering
\includegraphics[width=0.6\linewidth]{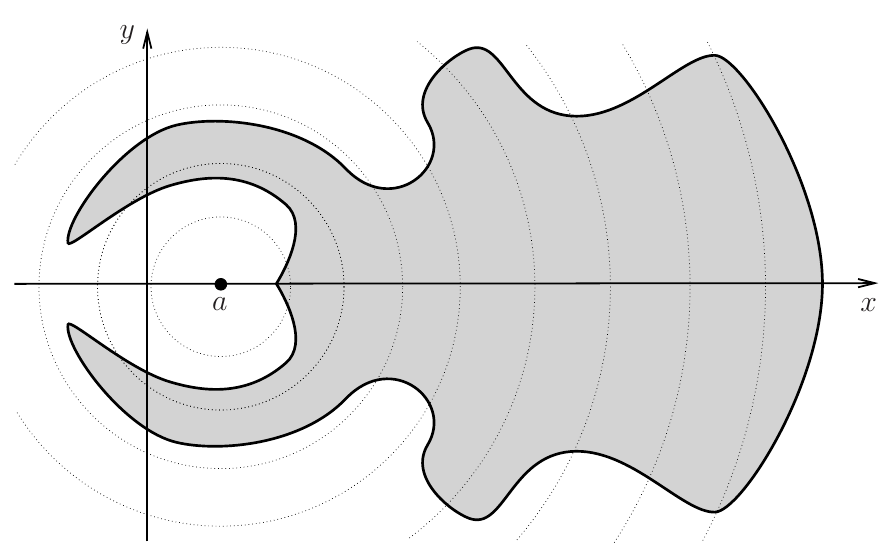}
\caption{An example of a domain with circular symmetry.}
\label{fig0}
\end{figure}

The circular symmetry (or, more generally, circular \textit{symmetrization}) was introduced by P\'olya \cite{Polya1}, and we refer to \cite{baernstain,sarvas} for developments, where the circular symmetrization is called the $(1,N)$- (or $1$-dimensional) \textit{cap symmetrization}. 
In the planar case, the circular symmetry simply means that the image of $\Omega$ in the polar coordinates $(\rho,\theta)$ is symmetric with respect to the line  $\{(\rho,\theta):\,\theta=0\}$ and convex in the $\theta$-direction (this characterization was used in \cite{YG}).
We also refer to \cite[Lemma~2.2]{putter} for another characterization of this symmetry class for sufficiently regular planar domains.  
In Lemma~\ref{lem:characterization:symmetry} below, we provide yet another useful characterization of circularly symmetric sets via the polarization. 

	\begin{remark}\label{rem:sym}
		The Steiner symmetry of $\Omega$ with respect to the hyperplane $\{(x,y;z):\,y=0\}$
		means that for any point $(x,0;z)$ the intersection of $\Omega$ with the line $\{(x,t;z):\, t \in \mathbb{R}\}$ is either empty or a segment centered at $(x,0;z)$. 
		In other words, such $\Omega$ is symmetric with respect to $\{(x,y;z):\, y=0\}$ and convex in the $y$-direction. 
		The notion of circular symmetry with respect to $H_{a}^h$ reduces to the Steiner symmetry with respect to $\{(x,y;z):\, y=0\}$ in the formal limiting case $a = -\infty$. 
		We also refer to \cite[Section~4.13]{sarvas} for another relations between these two notions. 
		In practice, the direct passage to the limit might not be convenient, and it is common to work with these tools separately, cf.\ \cite{baernstain,brocksol,dubinin,sarvas}.
	\end{remark}

Now we introduce several assumptions on $\Omega$ and state our main result. 
\begin{itemize}\addtolength{\itemindent}{1em}
	\item[$(O_1)$]
	There exists $a \in \mathbb{R}$ such that
	$\Omega$ is circularly symmetric with respect to $H_a^h$.
	\item[$(O_2)$]
	For any $r>0$ and $\xi \in \mathbb{R}^{N-2}$, the intersection $\Omega \cap S_r^{2}(a,0;\xi)$ is not the full circle. 
	\item[$(O_3)$] If $\Omega$ satisfies the interior ball condition at $\mathbf{x} \in \partial \Omega$, then $\Omega$ is of class $C^{1,\delta}$ in a neighborhood of $\mathbf{x}$ for some $\delta \in (0,1)$.
	\item[$(O_4)$] $\Omega$ satisfies the exterior ball condition at any $\mathbf{x} \in \partial \Omega$.
\end{itemize}

\begin{theorem}\label{thm:main}
	Let the assumptions $(O_1)$--$(O_4)$ hold. 
	Then any second eigenfunction $u$ of \eqref{eq:D} satisfies
		\begin{equation}
			\mathrm{dist}(\mathcal{Z}(u), \partial \Omega) = 0.
		\end{equation}
\end{theorem}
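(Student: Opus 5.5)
We argue by contradiction, following the moving polarization scheme of \cite{BK1} adapted to rotations. Suppose that $u$ is a second eigenfunction with $\mathrm{dist}(\mathcal{Z}(u),\partial\Omega)>0$. Since $u$ is continuous and sign-changing, one of the nodal domains, say $\{u<0\}$ after replacing $u$ by $-u$, is compactly contained in $\Omega$. The other, $\{u>0\}$, contains a neighborhood of $\partial\Omega$ in $\Omega$. For each angle $\theta$ we consider the hyperplane $T_\theta$ that contains the axis $\{x=a,\,y=0\}$ and makes angle $\theta$ with the half-hyperplane $H_a^h$. We also consider the associated polarization $P_\theta$, which is the two-point rearrangement with respect to reflection across $T_\theta$, with the ``favoured'' half-space being the one containing $H_a^h$. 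By the announced characterization (Lemma~\ref{lem:characterization:symmetry}), assumption $(O_1)$ means exactly that $P_\theta\Omega=\Omega$ for all $\theta\in(0,\pi)$. Hence $P_\theta$ maps $\Wo$ into itself and preserves $\int|\nabla v^\pm|^p$ and $\int|v^\pm|^p$. By Lemma~\ref{lem:characterization:lambda}, the function $P_\theta u$, and also the polarization of $u$ toward the opposite half-space, are minimizers of $\mu_2$. Therefore $\alpha P_\theta u^+ + \beta P_\theta u^-$ is again a second eigenfunction.

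The key step is a rigidity statement. Two eigenfunctions $u$ and $P_\theta u$ solve the same equation, and on the positive side $P_\theta u\ge u$ holds, in a suitable sense, on the favoured half. The strong comparison principle for the $p$-Laplacian, applied in a region where the gradient does not vanish (for instance near the boundary, using $(O_3)$ and the Hopf lemma via \cite{lieberman}), together with connectedness of the positive nodal domain, then forces either $P_\theta u=u$ or $P_\theta u=u\circ\sigma_\theta$ in $\Omega$, where $\sigma_\theta$ is the reflection across $T_\theta$. We run this over $\theta\in(0,\pi)$. For $\theta$ near $0$ or near $\pi$, the compactly supported negative nodal domain lies in a well-defined position relative to $T_\theta$, which selects one of the two alternatives. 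Continuity in $\theta$ and closedness of each alternative then give a critical angle $\theta_0$ at which $u$ is symmetric with respect to $T_{\theta_0}$ and monotone in the angular variable on each side.

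From this symmetry and angular monotonicity we derive the contradiction. By $(O_2)$ no circle $S_r^2(a,0;\xi)$ lies entirely in $\Omega$, so every angular arc of $\Omega$ has two endpoints on $\partial\Omega$, where $u=0$. On each arc, $u$ is monotone from each boundary endpoint toward the symmetric point. Positivity of $u$ near $\partial\Omega$ then forces $u>0$ along every arc, which contradicts $u^-\not\equiv0$. Assumption $(O_4)$ enters here and in the previous step: the exterior ball condition gives continuity of $u$ up to $\partial\Omega$ with $u=0$ there, and rules out the negative set approaching the boundary tangentially.

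The main obstacle is the rigidity step. We must show that ``$P_\theta u$ is a second eigenfunction'' implies the dichotomy $P_\theta u\in\{u,\,u\circ\sigma_\theta\}$ without linearity, since we cannot subtract eigenfunctions. Our first attempt will be to compare $u^+$ with $P_\theta u^+$ only on the boundary-adjacent positive region. There, under $(O_3)$, $|\nabla u|>0$ near interior-ball boundary points, so the linearized operator is uniformly elliptic and the strong comparison principle applies. We then propagate the equality along the connected positive nodal domain by unique continuation for the ordered pair of solutions.
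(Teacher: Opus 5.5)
There is a genuine gap, and it appears before the step you flag as the main obstacle. The inference ``$P_\theta\Omega=\Omega$, hence $P_\theta$ maps $\Wo$ into itself'' is false for sign-changing functions. By Lemma~\ref{rem:lem:cuesta} one only has $P_\theta u\in\We(P_\theta\Omega^+_u\cup\widetilde P_\theta\Omega^-_u)$, and $(O_1)$ controls only one of these two sets.

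The nodal domain polarized against the favoured direction is reflected away from $H_a^h$, and in general it leaves $\Omega$. A typical example is the ball of radius $1$ centered at $(a+2,0;0)$: it satisfies $(O_1)$ and $(O_2)$ but is not symmetric about any $T_\theta$ with $\theta\in(0,\pi)$. In your normalization the domain polarized against the favoured direction is the compactly contained set $\{u<0\}$. Hence $P_\theta u$ is guaranteed to be admissible in $\mu_2$ only while the reflection of $\overline{\{u<0\}}\cup\mathcal{Z}(u)$ stays in $\overline\Omega$, i.e.\ up to the first contact angle $\theta^*$, and $\theta^*<\pi$ precisely because of $(O_2)$. The ``opposite'' polarization reflects the boundary layer $\{u>0\}$ outward, and for the ball above it is not admissible for any $\theta\in(0,\pi)$.

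So the one-parameter family of second eigenfunctions over $\theta\in(0,\pi)$, on which your rigidity and continuity argument runs, does not exist. Moreover, Lemma~\ref{lem:characterization:lambda} produces the eigenfunction $\alpha P_\theta u^++\beta P_\theta u^-$ with unknown $\alpha,\beta>0$, not $P_\theta u$. So the pointwise inequality $P_\theta u\ge u$ is not an ordering between two solutions of the same equation.

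The rigidity step itself needs a strong comparison principle across a whole nodal domain and unique continuation for the $p$-Laplacian. Unique continuation is open for $p\neq 2$. Strong comparison needs $\nabla u\neq 0$, which you control only near certain boundary points. The paper needs neither.

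The paper stops exactly at $\theta^*$. There $w=\alpha P_{\theta^*}u^++\beta P_{\theta^*}u^-$ is still a second eigenfunction, but its nodal set reaches $\partial\Omega$. It then derives a purely local contradiction at $\sigma_{\theta^*}(\mathbf{x})$ with $\mathbf{x}\in\partial\mathcal{Y}(u)$, using the normalization $u<0$ near $\partial\Omega$:
\begin{enumerate}
\item Reflecting the exterior ball from $(O_4)$ gives a ball in $\{u<0\}$ touching $\mathbf{x}$. Hopf then yields $|\nabla u(\mathbf{x})|>0$, so $\mathcal{Z}(u)$ is $C^2$ near $\mathbf{x}$ and $\Omega^+_u$ has an interior ball at $\mathbf{x}$.
\item The reflection of that interior ball is a ball in $\{w>0\}$ touching $\partial\Omega$ at $\sigma_{\theta^*}(\mathbf{x})$. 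Hopf gives $\partial_\nu w(\sigma_{\theta^*}(\mathbf{x}))<0$.
\item There are boundary points $\mathbf{y}_k\to\sigma_{\theta^*}(\mathbf{x})$ lying in $\overline{\{w<0\}}$, which give $\partial_\nu w(\mathbf{y}_k)\ge 0$.
\item Steps 2 and 3 contradict the $C^1$-regularity of $w$ up to $\partial\Omega$, which is supplied by $(O_3)$ and \cite{lieberman}.
\end{enumerate}
Your final angular-monotonicity argument is never reached without the rigidity, so the proposal as written does not close.
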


We believe that the assumptions $(O_3)$ and $(O_4)$ are of technical nature and can be weakened up to some extent, cf.\ \cite{damascelliNodal}. 
At the same time, the assumption $(O_2)$ is vital, as indicated by the construction from \cite{freitas-leylekian}.

Theorem~\ref{thm:main} will be proved in Section~\ref{section:proof}, and we discuss main auxiliary tools in Section~\ref{section:aux}.

\section{Polarization}\label{section:aux}
For a fixed $a \in \mathbb{R}$, consider the family of affine hyperplanes
	$$
	H_{\theta} = \{(x,y;z) \in \mathbb{R}^N:~ (x-a) \sin \theta - y \cos \theta = 0\},
	$$
	where $\theta \in [-\pi,\pi]$.
Denote the open half-spaces separated by $H_\theta$ as 
$$
\Sigma_{\theta}^\pm = \{(x,y;z)\in \mathbb{R}^N:~ \pm ((x-a) \sin \theta - y \cos \theta) > 0\},
$$
see Figure~\ref{fig2}. 
We also denote by $\sigma_{\theta}(\mathbf{x})$ the reflection of a point $\mathbf{x} \in \mathbb{R}^N$ with respect to $H_{\theta}$:
\begin{equation}\label{eq:sigma}
\sigma_\theta(x,y;z)
=
((x-a) \cos(2\theta)+y\sin(2\theta)+a,(x-a)\sin(2\theta)-y\cos(2\theta);z).
\end{equation}

Let $v: \mathbb{R} \to \mathbb{R}$ be some measurable function. 
The polarization of $v$ with respect to $H_{\theta}$ is a function $P_{\theta}  v: \mathbb{R} \to \mathbb{R}$ defined as (see, e.g., \cite{baernstain,BartschWethWillem,brocksol})
	\begin{equation}\label{eq:Pol}
		P_{\theta}  v(\mathbf{x}) = 
		\begin{cases}
			\min \{v(\mathbf{x}), v(\sigma_{\theta}(\mathbf{x}))\}, &\mathbf{x} \in \Sigma_{\theta}^+,\\
			v(\mathbf{x}), &\mathbf{x} \in H_{\theta},\\
			\max \{v(\mathbf{x}), v(\sigma_{\theta}(\mathbf{x}))\}, &\mathbf{x} \in \Sigma_{\theta}^-.
		\end{cases}
	\end{equation}
For further convenience, we also consider the ``complementary'' polarization $\widetilde{P}_\theta v:=-P_{\theta} (-v)$ or, equivalently, $\widetilde{P}_\theta v := P_{\theta+\pi} v$. 
In other words, in the definition of $\widetilde{P}_\theta v$, the placement of $\min$ and $\max$ is reversed compared to \eqref{eq:Pol}.

On can define the polarization of a set as the polarization of the corresponding characteristic function.
Effectively, for a measurable set $\Omega$, $P_{\theta}  \Omega$ and $\widetilde{P}_\theta \Omega$ are defined as
\begin{equation}\label{eq:POm}
	P_{\theta}  \Omega = 
	\begin{cases}
		\Omega \cap \sigma_{\theta}(\Omega) &\text{in } \Sigma_{\theta}^+,\\
		\Omega &\text{on } H_{\theta},\\
		\Omega \cup \sigma_{\theta}(\Omega) &\text{in } \Sigma_{\theta}^-,
	\end{cases}
	\quad 
	\text{and}
	\quad
	\widetilde{P}_\theta \Omega = 
	\begin{cases}
		\Omega \cup \sigma_{\theta}(\Omega) &\text{in } \Sigma_{\theta}^+,\\
		\Omega &\text{on } H_{\theta},\\
		\Omega \cap \sigma_{\theta}(\Omega) &\text{in } \Sigma_{\theta}^-,
	\end{cases}
\end{equation}
respectively, see Figure~\ref{fig2}.
Note that $\widetilde{P}_\theta \Omega = \mathbb{R}^N \setminus (P_{\theta}  (\mathbb{R}^N \setminus \Omega))$ or, equivalently, $\widetilde{P}_\theta \Omega = P_{\theta+\pi} \Omega$.
It is not hard to see from \eqref{eq:POm} that $P_\theta$ and $\widetilde{P}_\theta$ respect the set inclusion.
\begin{lemma}\label{lemma_domain_monotonicity}
	If $\Omega_1 \subset \Omega_2$, then $P_\theta \Omega_1 \subset P_\theta \Omega_2$ and $\widetilde{P}_\theta \Omega_1 \subset \widetilde{P}_\theta \Omega_2$ for any $\theta$.
\end{lemma}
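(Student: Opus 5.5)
The plan is to verify the inclusion pointwise, using the piecewise description \eqref{eq:POm}. Since $\mathbb{R}^N$ is the disjoint union of $\Sigma_\theta^+$, $H_\theta$ and $\Sigma_\theta^-$, it suffices to check $P_\theta \Omega_1 \subset P_\theta \Omega_2$ separately on each of these three pieces. The first observation is that the reflection preserves inclusions: since $\sigma_\theta$ is a bijection of $\mathbb{R}^N$ (indeed an involution, being a reflection), $\Omega_1 \subset \Omega_2$ implies $\sigma_\theta(\Omega_1) \subset \sigma_\theta(\Omega_2)$.

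On $\Sigma_\theta^+$, take $\mathbf{x} \in P_\theta\Omega_1 \cap \Sigma_\theta^+$. Then $\mathbf{x} \in \Omega_1 \cap \sigma_\theta(\Omega_1)$, hence $\mathbf{x} \in \Omega_2 \cap \sigma_\theta(\Omega_2)$, that is, $\mathbf{x} \in P_\theta \Omega_2$. On $H_\theta$, both sets coincide with $\Omega_i$ there, so the inclusion is immediate. On $\Sigma_\theta^-$, unions are monotone in the same way as intersections, so $\Omega_1 \cup \sigma_\theta(\Omega_1) \subset \Omega_2 \cup \sigma_\theta(\Omega_2)$. Combining the three pieces gives $P_\theta \Omega_1 \subset P_\theta \Omega_2$.

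For $\widetilde{P}_\theta$, I would use the identity $\widetilde{P}_\theta \Omega = P_{\theta+\pi}\Omega$ noted in the paper and apply the previous case with $\theta+\pi$ in place of $\theta$. Alternatively, the same three-piece check works verbatim with the roles of $\cap$ and $\cup$ swapped. There is no real obstacle here; the only point needing care is that $\sigma_\theta$ preserves inclusions, which holds because it is a bijection.
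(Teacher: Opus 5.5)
Your proof is correct and takes the paper's approach: the paper only remarks that the inclusion is evident from the piecewise description \eqref{eq:POm}. Your region-by-region check, using $\sigma_\theta(\Omega_1)\subset\sigma_\theta(\Omega_2)$, the monotonicity of $\cap$ and $\cup$, and the identity $\widetilde{P}_\theta\Omega = P_{\theta+\pi}\Omega$, supplies exactly the verification the paper leaves out.
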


\begin{figure}[!ht]
\centering
\includegraphics[width=0.6\linewidth]{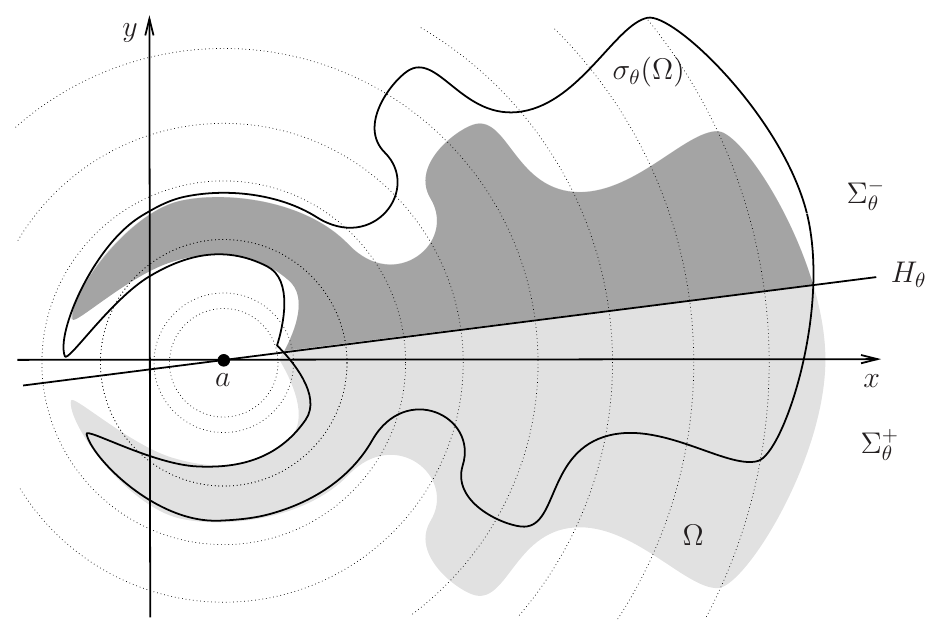}
\caption{Dark gray - $\Omega \cap \sigma_{\theta}(\Omega)$ in $\Sigma_{\theta}^-$, light gray - $\Omega \cup \sigma_{\theta}(\Omega)$ in $\Sigma_{\theta}^+$, so that $\widetilde{P}_\theta \Omega = \Omega$.}
\label{fig2}
\end{figure}

The following result provides a characterization of the circular symmetry via the polarization, see \cite[Lemma~2.2]{BK1} for an analogous characterization of the Steiner symmetry.
\begin{lemma}\label{lem:characterization:symmetry}
$\Omega$ is circularly symmetric with respect to $H_{a}^h$ if and only if 
either of the following three equivalent assumptions holds:
\begin{enumerate}[label={\rm(\roman*)}]
\item\label{lem:characterization:symmetry:1} 
$P_{\theta} \Omega = \Omega$ for all $\theta \in [-\pi/2,0]$ and $\widetilde{P}_\theta \Omega = \Omega$ for all $\theta \in [0,\pi/2]$.
\item\label{lem:characterization:symmetry:2} 
$P_{\theta} \Omega = \Omega$ for all $\theta \in [-\pi,0]$.
\item\label{lem:characterization:symmetry:3} 
$\widetilde{P}_\theta \Omega = \Omega$ for all $\theta \in [0,\pi]$.
\end{enumerate}
\end{lemma}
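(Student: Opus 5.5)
Everything reduces to the planar setting by passing to polar coordinates. Fix $\xi\in\mathbb{R}^{N-2}$ and write $(x,y;\xi)=(a+\rho\cos\varphi,\rho\sin\varphi;\xi)$ with $\rho>0$, $\varphi\in(-\pi,\pi]$. By \eqref{eq:sigma}, $\sigma_\theta$ preserves $z$ and $\rho$, and acts on the angle by $\varphi\mapsto 2\theta-\varphi$ (mod $2\pi$). Thus $\sigma_\theta$ maps each circle $S_\rho^2(a,0;\xi)$ to itself. Also, $H_\theta$ together with the axis $\{x=a,y=0\}$ is the line through angles $\theta$ and $\theta+\pi$, and $\Sigma_\theta^+=\{\sin(\theta-\varphi)>0\}$. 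By \eqref{eq:POm}, every statement about $P_\theta\Omega$ and $\widetilde P_\theta\Omega$ is therefore a statement about the one-dimensional sets $A=A_{\rho,\xi}=\{\varphi:\ \text{point in }\Omega\}\subset\mathbb{S}^1$, polarized with respect to the reflection $\varphi\mapsto 2\theta-\varphi$. It suffices to prove the lemma for each such $A\subset\mathbb{S}^1$. The points on the axis $\rho=0$ are fixed by every $\sigma_\theta$ and lie on every $H_\theta$, so they play no role.

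\textbf{Equivalence of (i), (ii), (iii).} Since $\widetilde P_\theta=P_{\theta+\pi}$, the conditions in (iii) for $\theta\in[0,\pi]$ are exactly the conditions in (ii) for $\theta+\pi\in[\pi,2\pi]\equiv[-\pi,0]$, so (ii)$\Leftrightarrow$(iii). Likewise, (i) reads: $P_\theta\Omega=\Omega$ for $\theta\in[-\pi/2,0]$ and, shifting by $-\pi$, for $\theta\in[-\pi,-\pi/2]$, which is exactly (ii). The remaining task is to show that circular symmetry is equivalent to (ii).

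\textbf{Circular symmetry $\Rightarrow$ (ii).} Take $A$ empty, the full circle, or an arc $\{|\varphi|<\alpha\}$, $\alpha\in(0,\pi)$; the first two cases are trivial. Fix $\theta\in[-\pi,0]$. The condition $P_\theta A=A$ means: whenever $\varphi\in\Sigma_\theta^-$ and $2\theta-\varphi\in A$, then $\varphi\in A$. Equivalently, of the two reflected points, the one lying in $\Sigma_\theta^-$ is at least as close to $\varphi=0$ in angular distance. The half-space $\Sigma_\theta^-$ is the open semicircle $\{\varphi\in(\theta,\theta+\pi)\}$, which contains the direction $0$ (or has it on its boundary when $\theta\in\{-\pi,0\}$). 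The reflection across the diameter at angle $\theta$ sends a point of this semicircle to the other semicircle, and the point on the same side as direction $0$ is the closer one. So if the reflected point lies in the arc centered at $0$, so does $\varphi$. I would verify this with the elementary identity $\cos(\varphi)-\cos(2\theta-\varphi)=2\sin\theta\,\sin(\theta-\varphi)$ and a sign check using $\sin\theta\le0$ and $\sin(\theta-\varphi)<0$ on $\Sigma_\theta^-$. This gives $\cos\varphi\ge\cos(2\theta-\varphi)$, so membership in $\{\cos\varphi>\cos\alpha\}$ transfers to $\varphi$. The same computation gives the intersection condition on $\Sigma_\theta^+$.

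\textbf{(ii) $\Rightarrow$ circular symmetry.} This is the main obstacle. Take $\theta=0$ and $\theta=-\pi$: the same reflection $\varphi\mapsto-\varphi$ occurs with the roles of $\Sigma^\pm$ swapped. This forces $A\cap\{y>0\}=\sigma_0(A)\cap\{y>0\}$, so $A$ is symmetric under $\varphi\mapsto-\varphi$. Next, suppose $A$ is neither empty nor full, and take $\varphi_0\in A$ with $\varphi_0\in(0,\pi]$. For any $\psi\in(-\varphi_0,\varphi_0)$, choose $\theta=(\varphi_0+\psi)/2\in(0,\varphi_0)$... but this $\theta$ is positive, so I would instead use the symmetric partner $-\varphi_0\in A$ together with $\theta=-(\varphi_0-\psi)/2\in[-\pi,0]$. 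Then $\sigma_\theta(-\varphi_0)=\psi$, and $\psi$ lies on the side $\Sigma_\theta^-$, so $P_\theta A=A$ gives $\psi\in A$. Hence $A$ is an angularly star-shaped set around $0$, i.e.\ an arc centered at $0$ (it cannot be the full circle minus only $\pi$ unless it is the full circle, which is handled by the set being open). The delicate points are the sign bookkeeping in this last step and care with the endpoints $\varphi=\pi$ and the boundary $H_\theta$. For the latter, I would rely on $\Omega$ being open, so that equality of sets off $H_\theta$ suffices.
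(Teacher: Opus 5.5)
Your proof is correct and follows essentially the paper's route: the equivalences via $\widetilde P_\theta=P_{\theta+\pi}$, symmetry from $\theta=0,-\pi$, and then reflecting a known point of $A$ onto a target point in the favored half-plane. Your direct star-shapedness argument is a streamlined form of the paper's contradiction argument and also makes its final $\widetilde P_{\pi/2}$ step unnecessary. One slip: the punctured circle $\{|\varphi|<\pi\}$ is relatively open and does occur (e.g.\ for a disk centered at $(a,0)$ slit along $\{x\le a,\ y=0\}$), so openness does not exclude it; it is just the arc with $\alpha=\pi$ centered at $\varphi=0$, your cosine computation already handles it, so allow $\alpha\in(0,\pi]$ and drop that parenthetical.
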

\begin{proof}
The assumptions \ref{lem:characterization:symmetry:1}, \ref{lem:characterization:symmetry:2}, \ref{lem:characterization:symmetry:3} are equivalent in view of the equality ${P}_\theta \Omega = \widetilde{P}_{\theta+\pi} \Omega$. Thus, it is sufficient to deal with only one of them. 

Let $\Omega$ be circularly symmetric with respect to $H_{a}^h$. Since for any $r>0$ and $\xi \in \mathbb{R}^{N-2}$ 
the intersection $\Omega \cap S_r^{2}(a,0;\xi)$ is either empty, or the full circle, or an arc whose middle point is $(a+r,0;\xi)$, we deduce from \eqref{eq:POm} that any such intersection does not change under the action of $P_{\theta}$ for $\theta \in [-\pi/2,0]$ and $\widetilde{P}_\theta$ for $\theta \in [0,\pi/2]$.  This gives the necessary part of the lemma.
	
Assume now that $\widetilde{P}_\theta \Omega = \Omega$ for all $\theta \in [0,\pi]$. 
Taking $\theta=0$ and $\theta=\pi$, we see from \eqref{eq:POm} that $\Omega$ is symmetric with respect to $H_{0}$. 
Suppose that there exist $r>0$ and $\xi \in \mathbb{R}^{N-2}$ such that the intersection $A := \Omega \cap S_r^{2}(a,0;\xi)$ has at least two connected components. 
Then one can find $\mathbf{x} \in S_r^{2}(a,0;\xi) \setminus  A$ and $\mathbf{y} \in A$ such that, in the polar coordinates, $\mathbf{x} = (\rho,\theta_1;\xi)$ and $\mathbf{y} = (\rho,\theta_2;\xi)$ with $0 \leq \theta_1 < \theta_2 \leq \pi$. 
Denoting $\theta_0 = (\theta_1+\theta_2)/2$, we see that $\sigma_{\theta_0} (\mathbf{x}) = \mathbf{y}$, which contradicts the assumption $\widetilde{P}_{\theta_0} \Omega = \Omega$. 
Therefore, for any $r>0$ and $\xi \in \mathbb{R}^{N-2}$, $\Omega \cap S_r^{2}(a,0;\xi)$ is either empty or a connected set (and hence either the full circle or an arc symmetric with respect to $H_0$). 
Finally, since $\widetilde{P}_{\pi/2} \Omega = \Omega$, we conclude that if, for some $r>0$ and $\xi \in \mathbb{R}^{N-2}$, $\Omega \cap S_r^{2}(a,0;\xi)$ is an arc, then its middle point is $(a+r,0;\xi)$. 
\end{proof}

Let us discuss a few properties of the polarization of functions. 
It is known that $(P_{\theta} v)^\pm = P_{\theta} (v^\pm)$, see, e.g., \cite[Lemma~2.1]{BartschWethWillem}, and hence we can write $P_{\theta}  v^\pm$, for short.
As a consequence, 
	\begin{equation}\label{eq:espantion_of_polarization_positive_negative} 
		P_{\theta}  (v^+ + v^-)
		= 
		P_{\theta}  v
		=
		(P_{\theta}  v)^+ + (P_{\theta}  v)^-
		= 
		P_{\theta}  v^+ + P_{\theta}  v^-.
	\end{equation}
	For any $v \in L^p(\mathbb{R}^N)$, we have 
	\begin{equation}\label{eq:weak-pol}
		\int_{\mathbb{R}^N} |P_{\theta} v^\pm|^p \,d\mathbf{x} 
		= 
		\int_{\mathbb{R}^N}  |v^\pm|^p \,d\mathbf{x},
	\end{equation}
and hence $P_{\theta} v \in L^p(\mathbb{R}^N)$, see \cite[Lemma~2.2]{BartschWethWillem} or \cite[Eq.~(3.7)]{brocksol}. 
	If $v \in \W$, then $P_{\theta} v \in \W$, and moreover
	\begin{equation}\label{eq:weak-po2}
\int_{\mathbb{R}^N} |\nabla P_{\theta} v^\pm|^p \,d\mathbf{x} = \int_{\mathbb{R}^N} |\nabla v^\pm|^p \,d\mathbf{x}, 
	\end{equation}
see \cite[Lemma~5.3]{brocksol} and \cite[Lemmas~2.2 and 2.3]{BartschWethWillem}. 
It is also possible to polarize functions from $\Wo$, where $\Omega$ is an open set, by considering their trivial extensions to $\mathbb{R}^N$.
Resulting functions belong to $\We(P_{\theta} \Omega \cup \widetilde{P}_\theta \Omega)$, as it follows from \cite[Corollary~5.1]{brocksol} and \eqref{eq:espantion_of_polarization_positive_negative}.
Using \cite[Lemma~5.6]{CuestaFucik}, this result can be clarified in the following way. 
\begin{lemma}\label{rem:lem:cuesta}
Let $v \in \Wo \cap C(\Omega)$ and let $\Omega^+_v := \{\mathbf{x} \in \Omega: v(\mathbf{x}) > 0\}$ and $\Omega^-_v := \{\mathbf{x} \in \Omega: v(\mathbf{x}) < 0\}$. Then $P_\theta v \in \We(P_\theta \Omega^+_v \cup \widetilde{P}_\theta \Omega^-_v)$ for any $\theta$.  
\end{lemma}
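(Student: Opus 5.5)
The plan is to split $P_\theta v$ into its positive and negative parts via \eqref{eq:espantion_of_polarization_positive_negative}, namely $P_\theta v = P_\theta v^+ + P_\theta v^-$. We then show that $P_\theta v^+ \in \We(P_\theta \Omega_v^+)$ and $P_\theta v^- \in \We(\widetilde{P}_\theta \Omega_v^-)$ separately. Since $\We(U_1) + \We(U_2) \subset \We(U_1 \cup U_2)$ for open sets $U_1,U_2$, this gives the claim. The sets $\Omega_v^\pm$ are open because $v \in C(\Omega)$, and their polarizations are open as well. Indeed, in $\Sigma_\theta^\pm$ they are finite unions and intersections of open sets. On $H_\theta$ a point of $\Omega_v^+$ lies in both $\Omega_v^+$ and $\sigma_\theta(\Omega_v^+)$, so a small ball around it lies in $P_\theta\Omega_v^+$.

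The key input is \cite[Lemma~5.6]{CuestaFucik}. It states that for an open set $U$ and $w \in \W \cap C(U)$ with $w = 0$ a.e.\ outside $U$ and $w>0$ in $U$ (trivially extended), we have $w \in \We(U)$. Equivalently, a nonnegative $W^{1,p}$ function vanishing outside an open set where it is continuous belongs to the zero-trace space of the positivity set. I would first verify that $v^+ \in \We(\Omega_v^+)$ by applying this lemma to $w=v^+$ and $U=\Omega_v^+$. Here $v^+ \in W^{1,p}(\mathbb{R}^N)$ as the trivial extension of an element of $\Wo$, and $v^+$ is continuous and positive on $\Omega_v^+$.

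Next, $P_\theta v^+ \in \W$ by \cite{BartschWethWillem,brocksol}, and it is nonnegative. Its positivity set is the polarization of the positivity set of $v^+$, i.e., $\{P_\theta v^+ > 0\} = P_\theta \Omega_v^+$. This follows pointwise from \eqref{eq:Pol}: on $\Sigma_\theta^+$ the minimum is positive iff both values are positive, on $\Sigma_\theta^-$ the maximum is positive iff one of them is, and on $H_\theta$ nothing changes. Moreover, $P_\theta v^+$ is continuous on $P_\theta\Omega_v^+$. Near points of $\Sigma_\theta^\pm$ it is a min or max of continuous functions, where we treat $v^+$ as continuous on the open set $\Omega \cup \sigma_\theta(\Omega)$ region as needed. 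Near points of $H_\theta$, both $v^+$ and $v^+\circ\sigma_\theta$ are continuous and coincide on $H_\theta$. Applying \cite[Lemma~5.6]{CuestaFucik} once more yields $P_\theta v^+ \in \We(P_\theta \Omega_v^+)$. The same argument applied to $-v^-\ge 0$ gives the negative part. There we use $P_\theta v^- = -\widetilde{P}_\theta(-v^-)$, so the relevant positivity set $\{P_\theta v^- < 0\}$ equals $\widetilde{P}_\theta \Omega_v^-$, since the roles of min and max swap. Hence $P_\theta v^- \in \We(\widetilde{P}_\theta\Omega_v^-)$.

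The main obstacle is the continuity check, which is needed to invoke the Cuesta--Fu\v{c}\'ik lemma. The issue is that $v$ is only continuous in $\Omega$ and not up to $\partial\Omega$. Near a point $\mathbf{x}\in P_\theta\Omega_v^+ \cap \Sigma_\theta^-$ with $\mathbf{x}\in\Omega_v^+$ but $\sigma_\theta(\mathbf{x}) \in \partial\Omega$, the function $v^+\circ\sigma_\theta$ may fail to be continuous at $\mathbf{x}$. I would handle this by a lower semicontinuity argument. On $\Sigma_\theta^-$ we have $P_\theta v^+ = \max\{v^+, v^+\circ\sigma_\theta\}$, so it suffices to have continuity on the set where the maximum is positive and attained by a continuous branch. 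If that fails, I would instead approximate $v^+$ by $(v-\varepsilon)^+$. This function is compactly supported in $\Omega_v^+$ when $v$ is, after first approximating $v$ in $W^{1,p}$ by functions whose level sets $\{v>\varepsilon\}$ are well inside $\Omega$. Polarization of such functions is continuous, with support compactly contained in $P_\theta\Omega_v^+$. The continuity of $P_\theta$ in $W^{1,p}$ from \cite{brocksol} then passes the inclusion to the limit, using that $\We(U)$ is closed.
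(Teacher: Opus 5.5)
Your overall architecture is the paper's: split $P_\theta v=P_\theta v^+ + P_\theta v^-$ via \eqref{eq:espantion_of_polarization_positive_negative}, handle the negative part through $\widetilde P_\theta=P_{\theta+\pi}$, and use $\We(U_1)+\We(U_2)\subset\We(U_1\cup U_2)$. The gap is in the step $P_\theta v^+\in\We(P_\theta\Omega_v^+)$.

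The version of \cite[Lemma~5.6]{CuestaFucik} you rely on is false. Let $U$ be a ball with a closed segment removed (a closed piece of a hyperplane if $N\geq 3$), and let $w$ be a smooth bump that is positive on the ball. Then $w\in W^{1,p}(\mathbb{R}^N)\cap C(U)$, $w>0$ in $U$, and $w=0$ outside $U$, yet $w\notin\We(U)$, because the removed set has positive $p$-capacity and $w$ does not vanish on it. Continuity on the positivity set alone is not enough. What works is continuity on a larger open set $G$ with $w\in\We(G)$, e.g.\ $G=\Omega$ for $v$: then $\{w\geq\varepsilon\}$ is relatively closed in $G$ and stays away from $\partial U\cap G$. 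This legitimately gives $v^\pm\in\We(\Omega_v^\pm)$, and that is all the paper takes from \cite{CuestaFucik}. It cannot be applied to $P_\theta v^+$, though. The natural ambient set there is $P_\theta\Omega$, and $\max\{v^+,v^+\circ\sigma_\theta\}$ can indeed be discontinuous at points of $\Sigma_\theta^-$ whose reflection lies on $\partial\Omega$, as you observed. The suggested lower semicontinuity argument does not repair this.

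Your fallback is the right idea, but as written it also fails. Approximating $v$ by functions whose level sets lie well inside $\Omega$ breaks the link with $\Omega_v^+$, so the polarized approximants need not be supported in $P_\theta\Omega_v^+$. The approximants must be compactly supported in $\Omega_v^+$ itself, and these exist precisely because $v^+\in\We(\Omega_v^+)$. Take $\phi_k\in C_c^\infty(\Omega_v^+)$ with $\phi_k\to v^+$ in $W^{1,p}$, and use $\phi_k^+$.

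Then $P_\theta\phi_k^+$ is Lipschitz and nonnegative, with $\{P_\theta\phi_k^+>0\}=P_\theta\{\phi_k^+>0\}$. By Lemma~\ref{lemma_domain_monotonicity}, it vanishes outside $P_\theta K_k\subset P_\theta\Omega_v^+$, where $K_k=\mathrm{supp}\,\phi_k^+$. The set $P_\theta K_k$ is compact, since $(K_k\cup\sigma_\theta(K_k))\cap H_\theta=K_k\cap H_\theta$. Hence $P_\theta\phi_k^+\in\We(P_\theta\Omega_v^+)$.

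To pass to the limit, note that $|P_\theta f-P_\theta g|\leq\max\{|f-g|,|f-g|\circ\sigma_\theta\}$ pointwise, which gives $L^p$-convergence. Also $\|\nabla P_\theta\phi_k^+\|_p=\|\nabla\phi_k^+\|_p$ stays bounded by \eqref{eq:weak-po2}. So $P_\theta\phi_k^+\rightharpoonup P_\theta v^+$ weakly in $W^{1,p}(\mathbb{R}^N)$, and weak closedness of $\We(P_\theta\Omega_v^+)$ concludes. Neither continuity of $P_\theta v^+$ nor strong $W^{1,p}$-continuity of $P_\theta$ is needed.

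This argument is exactly \cite[Corollary~5.1]{brocksol} applied on the domain $\Omega_v^+$; for $-v^-$ one uses $P_{\theta+\pi}$ and $\Omega_v^-$. That is the paper's route.
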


\section{Proof of Theorem~\ref{thm:main}}\label{section:proof}

Throughout this section, $u \in \Wo$ stands for a second eigenfunction of \eqref{eq:D}. 
Denote the nodal domains of $u$ as
$$
\Omega^+_u = \{\mathbf{x} \in \Omega:~ u(\mathbf{x})>0\}
\quad \text{and} \quad
\Omega^-_u = \{\mathbf{x} \in \Omega:~ u(\mathbf{x})<0\}.
$$
Suppose, by contradiction, that the nodal set of $u$ does not intersect the boundary of $\Omega$, that is,
\begin{equation}\label{eq:dist1}
\mathrm{dist}(\mathcal{Z}(u), \partial \Omega) > 0.
\end{equation}
Without loss of generality, we assume that $u < 0$ in near $\partial \Omega$, and hence $\overline{\Omega^+_u} \subset \Omega$. 
Our aim is to derive a contradiction, and the arguments are split in verification of the following two steps:

\textbf{Step I}.
Considering the polarization $P_{\theta} u$ of $u$, we increase $\theta$ until a certain value $\theta^*$ at which $\mathcal{Z}(P_{\theta^*} u)$ touches $\partial \Omega$ for the first time. 
From Lemma~\ref{lem:characterization:lambda}, we deduce that $\alpha P_{\theta^*} u^+ + \beta P_{\theta^*} u^-$ is a second eigenfunction of \eqref{eq:D} for some $\alpha,\beta>0$. 
In this step, we assume that $\Omega$ satisfies only the assumptions $(O_1)$ and $(O_2)$.

\textbf{Step II}. 
Additionally imposing the assumptions $(O_3)$ and $(O_4)$, we show that the function $\alpha P_{\theta_0} u^+ + \beta P_{\theta_0} u^-$ cannot be an eigenfunction of \eqref{eq:D} via arguments based on the boundary point lemma.

\medskip
Let us discuss \textbf{Step I}. 
For any point $\mathbf{x} \in \Omega$, consider
$$
\theta_{\mathbf{x}}
:= 
\inf
\left\{\theta > 0:~ 
\sigma_{\theta}(\mathbf{x})
\in \partial \Omega \right\}.
$$
Since $\Omega$ is symmetric with respect to $H_0$, we see that $\sigma_{0} (\mathbf{x}) \in \Omega$. 
Therefore, in view of the openness of $\Omega$ and the continuity of the mapping $\theta \mapsto \sigma_{\theta}(\mathbf{x})$, we get $\theta_{\mathbf{x}}>0$ and the infimum is attained. 
Thanks to the assumption $(O_2)$, we further obtain $\theta_{\mathbf{x}} < \pi$. 
(This is the only place where $(O_2)$ is used.)
Let us observe that the mapping $\mathbf{x} \mapsto \theta_{\mathbf{x}}$ is lower semicontinuous in $\Omega$.
Indeed, take any $\mathbf{x} \in \Omega$ and any sequence $\{\mathbf{x}_n\} \subset \Omega$ converging to $\mathbf{x}$. 
Since $\{\theta_{\mathbf{x}_n}\}$ is bounded, $\{\sigma_{\theta_{\mathbf{x}_n}}(\mathbf{x}_n)\} \subset \partial\Omega$, and $\partial\Omega$ is a compact set, we have, up to a subsequence,  
$$
\sigma_{\theta_{\mathbf{x}_n}}(\mathbf{x}_n) 
\to 
\sigma_{\lim \theta_{\mathbf{x}_n}}(\lim \mathbf{x}_n) \in \partial \Omega,
$$
and hence $\liminf \theta_{\mathbf{x}_n} \geq \theta_{\mathbf{x}}$. Since the initial choice of the sequence $\{\mathbf{x}_n\}$ is arbitrary, we obtain the desired lower semicontinuity.

Now we define\footnote{Since the unique continuation property for eigenfunctions of the $p$-Laplacian with $p \neq 2$ is unknown, we cannot directly say that $\mathcal{Z}(u) \subset \overline{\Omega^+_u}$, and this is the reason to use $\overline{\Omega^+_u} \cup \mathcal{Z}(u)$ in the definition of $\theta^*$.}
\begin{equation}\label{eq:theta-star}
\theta^*
:= 
\inf
\left\{\theta_{\mathbf{x}}:~ \mathbf{x} \in \overline{\Omega^+_u} \cup \mathcal{Z}(u) \right\},
\end{equation}
so that $\theta^* < \pi$. 
Due to our assumptions \eqref{eq:dist1} and $\overline{\Omega^+_u} \subset \Omega$, we also have $\theta^*>0$. 
Since $\overline{\Omega^+_u} \cup \mathcal{Z}(u)$ is a compact set and $\mathbf{x} \mapsto \theta_{\mathbf{x}}$ is lower semicontinuous, the infimum in $\theta^*$ is attained. 
Moreover, since $\Omega^+_u$ is open, the infimum is delivered by points from $\mathcal{Z}(u)$.
Denote the set of minimizers as $\mathcal{Y}(u)$, that is, 
\begin{equation}\label{def:Y(u)}
\mathcal{Y}(u) 
:= 
\left\{\mathbf{x} \in \mathcal{Z}(u):~ 
\sigma_{\theta^*} (\mathbf{x}) \in \partial \Omega \right\}.
\end{equation}
We do not know whether $\mathcal{Y}(u)$ is a singleton, and hence we define its relative boundary as 
$$
\partial \mathcal{Y}(u) 
:= 
\{\mathbf{x} \in \mathcal{Y}(u): \text{ any sufficiently small neighborhood of } \mathbf{x} \text{ contains } \mathbf{y} \in \mathcal{Z}(u) \setminus \mathcal{Y}(u)\},
$$
see Figure~\ref{fig1}.
Since $\mathcal{Z}(u)$ does not have isolated points (which follows from the strong maximum principle \cite[Theorem~5]{vazquez}) and \eqref{eq:dist1} holds, we see that $\partial \mathcal{Y}(u)$ is nonempty. 

\begin{lemma}\label{lemma_moving_polarization}
	Let $(O_1)$ and $(O_2)$ be satisfied. 
	Then $P_{\theta^*} u \in \Wo \setminus \{0\}$ 
	and $\mathrm{dist}(\mathcal{Z}(P_{\theta^*} u), \partial \Omega) = 0$. 
	Moreover, there exist $\alpha,\beta>0$ such that 
	$\alpha P_{\theta^*} u^+ + \beta P_{\theta^*} u^-$ is a second eigenfunction of \eqref{eq:D}.
\end{lemma}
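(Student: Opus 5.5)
The plan has three parts: show that $P_{\theta^*}u$ is supported in $\Omega$, show that its nodal set reaches $\partial\Omega$, and then identify it, after rescaling the two signs, as a second eigenfunction via Lemma~\ref{lem:characterization:lambda}.

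\textbf{Support.} By Lemma~\ref{rem:lem:cuesta}, $P_{\theta^*}u \in \We(P_{\theta^*}\Omega^+_u \cup \widetilde P_{\theta^*}\Omega^-_u)$. Since $\Omega^-_u \subset \Omega$, Lemma~\ref{lemma_domain_monotonicity} gives $\widetilde P_{\theta^*}\Omega^-_u \subset \widetilde P_{\theta^*}\Omega$. Since $\theta^* \in (0,\pi)$, Lemma~\ref{lem:characterization:symmetry}\ref{lem:characterization:symmetry:3} gives $\widetilde P_{\theta^*}\Omega=\Omega$.

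For the positive part, recall $P_{\theta^*}\Omega^+_u = (\Omega^+_u\cap\sigma_{\theta^*}\Omega^+_u)\cup(\text{parts of }\Omega^+_u\cup\sigma_{\theta^*}\Omega^+_u)$. It therefore suffices to show $\sigma_{\theta^*}(\Omega^+_u)\subset\Omega$. Take $\mathbf{x}\in\overline{\Omega^+_u}$. By definition of $\theta^*$, $\theta_{\mathbf{x}}\ge\theta^*$, so $\sigma_\theta(\mathbf{x})\notin\partial\Omega$ for all $\theta\in(0,\theta^*)$. The path $\theta\mapsto\sigma_\theta(\mathbf{x})$ starts at $\sigma_0(\mathbf{x})\in\Omega$ and is continuous, so it never leaves $\Omega$ for $\theta<\theta^*$. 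Hence $\sigma_{\theta^*}(\mathbf{x})\in\overline\Omega$.

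For $\mathbf{x}\in\Omega^+_u$ (open), I claim $\sigma_{\theta^*}(\mathbf{x})\in\Omega$. If instead $\sigma_{\theta^*}(\mathbf{x})\in\partial\Omega$, then $\mathbf{x}$ would be a minimizer of $\theta_{\cdot}$ lying in $\Omega^+_u$. By the paper's observation that minimizers lie in $\mathcal Z(u)$, this is impossible. That observation rests on openness: a small perturbation of $\mathbf{x}$ inside $\Omega^+_u$ would produce a point reaching $\partial\Omega$ earlier, and I would verify this carefully using that $\Omega\ni\sigma_\theta(\cdot)$ for smaller $\theta$.

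Combining the two inclusions gives $P_{\theta^*}u\in\Wo$. The function is nonzero by \eqref{eq:weak-pol}.

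\textbf{Nodal set touches $\partial\Omega$.} Pick $\mathbf{x}_0\in\partial\mathcal Y(u)$, so that $\mathbf{y}_0:=\sigma_{\theta^*}(\mathbf{x}_0)\in\partial\Omega$ and $u(\mathbf{x}_0)=0$. Arbitrarily close to $\mathbf{x}_0$ there are points $\mathbf{x}$ that are either in $\Omega^+_u$ or are nodal points with $\sigma_{\theta^*}(\mathbf{x})\in\Omega$. Pick such $\mathbf{x}$ with $\sigma_{\theta^*}(\mathbf{x})$ close to $\mathbf{y}_0$; near $\mathbf{y}_0$ we have $u<0$.

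Depending on which side of $H_{\theta^*}$ the point $\sigma_{\theta^*}(\mathbf{x})$ lies, the polarization takes at that point the max or the min of $u(\mathbf{x})\ge 0$ and $u(\sigma_{\theta^*}\mathbf{x})<0$. One of these choices yields $P_{\theta^*}u\ge0$ at a point near $\mathbf{y}_0$ or at its mirror image near $\mathbf{x}_0$. The delicate point is to ensure the nonnegative value appears near $\partial\Omega$, not merely near $\mathbf{x}_0$. I expect to argue as follows. $\Omega^+_u$ lies on the side where $P$ takes the max, since the boundary is reached by rotating toward increasing $\theta$, so $\sigma_{\theta^*}(\Omega^+_u)$ sits in the max-side half-space near $\mathbf{y}_0$. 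There $P_{\theta^*}u=\max\{u,u\circ\sigma\}\ge0$ at points approaching $\mathbf{y}_0$, while $P_{\theta^*}u$ takes negative values nearby. Hence $\mathcal Z(P_{\theta^*}u)$ accumulates at $\mathbf{y}_0$.

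Fixing the correct half-space orientation (whether $\mathbf{y}_0\in\Sigma^-_{\theta^*}$ or lies on $H_{\theta^*}$) is the main obstacle. I would handle it by a case analysis using the sign of the derivative of $\theta\mapsto\sigma_\theta(\mathbf{x})$.

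\textbf{Eigenfunction.} By \eqref{eq:weak-pol}, \eqref{eq:weak-po2} and \eqref{eq:espantion_of_polarization_positive_negative}, the Rayleigh quotients of $(P_{\theta^*}u)^\pm = P_{\theta^*}u^\pm$ equal those of $u^\pm$. Both of those equal $\lambda_2$, as seen by testing \eqref{eq:D:weak} with $u^\pm$. Therefore $P_{\theta^*}u$ is a minimizer of $\mu_2$, and Lemma~\ref{lem:characterization:lambda} gives $\alpha,\beta>0$ such that $\alpha P_{\theta^*}u^++\beta P_{\theta^*}u^-$ is a second eigenfunction. This eigenfunction has the same nodal set as $P_{\theta^*}u$.
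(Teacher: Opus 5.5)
Your support and eigenfunction arguments match the paper's proof. You show $\sigma_{\theta^*}(\Omega^+_u)\subset\Omega$ because minimizers of $\mathbf{x}\mapsto\theta_{\mathbf{x}}$ cannot lie in the open set $\Omega^+_u$: rotating $\mathbf{x}$ by a small angle $-\varepsilon$ about the axis $\{x=a,\ y=0\}$ gives a point of $\Omega^+_u$ that already reaches $\sigma_{\theta^*}(\mathbf{x})$ at $\theta^*-\varepsilon/2$. Lemmas~\ref{lemma_domain_monotonicity}, \ref{lem:characterization:symmetry} and \ref{rem:lem:cuesta} then give $P_{\theta^*}u\in\Wo$. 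Equality of the Rayleigh quotients together with Lemma~\ref{lem:characterization:lambda} gives the eigenfunction. The paper's proof does not argue the claim $\mathrm{dist}(\mathcal{Z}(P_{\theta^*}u),\partial\Omega)=0$ at all, so there you go beyond it. However, your argument for that claim is not complete.

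\textbf{Orientation.} The statement ``$\Omega^+_u$ lies on the side where $P$ takes the max'' is backwards: points near $\mathbf{x}_0$ lie in $\Sigma^+_{\theta^*}$, the min side. No case analysis is needed.
\begin{itemize}
\item In polar coordinates $(\rho,\phi)$ about the axis, $\sigma_\theta$ acts by $\phi\mapsto 2\theta-\phi$, and $\Sigma^-_\theta=\{\phi\in(\theta,\theta+\pi)\}$.
\item Suppose $\Omega$ meets the circle through $\mathbf{x}_0$ in the arc $\{|\phi|<\phi_0\}$ with $\phi_0\le\pi$, and $\mathbf{x}_0$ has angle $\phi_1\in(-\phi_0,\phi_0)$. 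Then $\theta^*=\theta_{\mathbf{x}_0}=(\phi_0+\phi_1)/2$.
\item Hence $\mathbf{y}_0$ has angle $\phi_0=\theta^*+(\phi_0-\phi_1)/2\in(\theta^*,\theta^*+\pi)$, so $\mathbf{y}_0\in\Sigma^-_{\theta^*}$ always.
\end{itemize}
Here $\mathbf{x}_0$ is off the axis, since by $(O_2)$ $\Omega$ contains no axis points.

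\textbf{Sign change.} The step ``$P_{\theta^*}u$ takes negative values nearby, hence $\mathcal{Z}(P_{\theta^*}u)$ accumulates at $\mathbf{y}_0$'' is unsupported. You never produce those negative values. Turning a sign change into zeros would also need a connectedness property of $\Omega$ near $\mathbf{y}_0$ that is not available under $(O_1)$, $(O_2)$ alone.

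Neither is needed; use zeros directly.
\begin{itemize}
\item By the definition of $\partial\mathcal{Y}(u)$ there are $\mathbf{x}_k\in\mathcal{Z}(u)\setminus\mathcal{Y}(u)$ with $\mathbf{x}_k\to\mathbf{x}_0$.
\item Then $\theta_{\mathbf{x}_k}>\theta^*$, so $\mathbf{z}_k:=\sigma_{\theta^*}(\mathbf{x}_k)\in\Omega$, $\mathbf{z}_k\to\mathbf{y}_0$, and $\mathbf{z}_k\in\Sigma^-_{\theta^*}$ for large $k$.
\item Since $u(\mathbf{x}_k)=0$ and $u(\mathbf{z}_k)<0$ near $\partial\Omega$, you get $P_{\theta^*}u(\mathbf{z}_k)=\max\{u(\mathbf{z}_k),u(\mathbf{x}_k)\}=0$.
\end{itemize}
This proves the claim without any positivity or intermediate-value argument. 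Like your sketch, it relies on $\partial\mathcal{Y}(u)\neq\emptyset$, which the paper asserts just before the lemma.
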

\begin{proof}
By the definition of $\theta^*$, we have $\sigma_{\theta^*}(\overline{\Omega^+_u}) \subset \overline{\Omega}$, which implies that $\sigma_{\theta^*} (\Omega^+_u) \subset \Omega$. 
Therefore, noting that $\Omega^+_u \subset \Omega$, we deduce from \eqref{eq:POm} that 
\begin{equation}\label{eq:p+om}
{P}_{\theta^*} \Omega^+_u \subset \Omega.
\end{equation}
On the other hand, since $\theta^* \in (0,\pi)$, $\Omega^-_u \subset \Omega$, and $\Omega$ satisfies $(O_1)$, we get from Lemmas~\ref{lemma_domain_monotonicity} and \ref{lem:characterization:symmetry} that 
\begin{equation}\label{eq:p-om}
\widetilde{P}_{\theta^*}\Omega^-_u \subset \widetilde{P}_{\theta^*} \Omega = \Omega.
\end{equation}
Combining \eqref{eq:p+om} and \eqref{eq:p-om}, and recalling that $u \in C(\Omega)$, we obtain from Lemma~\ref{rem:lem:cuesta} that 
$$
P_{\theta^*} u \in \We({P}_{\theta^*} \Omega^+_u \cup \widetilde{P}_{\theta^*}\Omega^-_u) \subset \Wo,
$$
and we have $P_{\theta^*} u \not\equiv 0$ by construction. 
Using \eqref{eq:weak-pol}, \eqref{eq:weak-po2}, and recalling that $u$ satisfies \eqref{eq:D} with $\lambda=\lambda_2$, we also get
$$
\int_{\Omega} |\nabla P_{\theta} u^\pm|^p \,d\mathbf{x} 
= 
\int_{\Omega} |\nabla u^\pm|^p \,d\mathbf{x}
=
\lambda_2 \int_{\Omega}  |u^\pm|^p \,d\mathbf{x}
=
\lambda_2 \int_{\Omega} |P_{\theta} u^\pm|^p \,d\mathbf{x}.
$$
Consequently, $P_{\theta} u$ is a minimizer of $\mu_2$ defined in Lemma~\ref{lem:characterization:lambda}.
This lemma further yields the existence of $\alpha,\beta>0$ such that $\alpha P_{\theta^*} u^+ + \beta P_{\theta^*} u^-$ is another second eigenfunction of \eqref{eq:D}.
\end{proof}

\begin{figure}[!ht]
\centering
\includegraphics[width=0.6\linewidth]{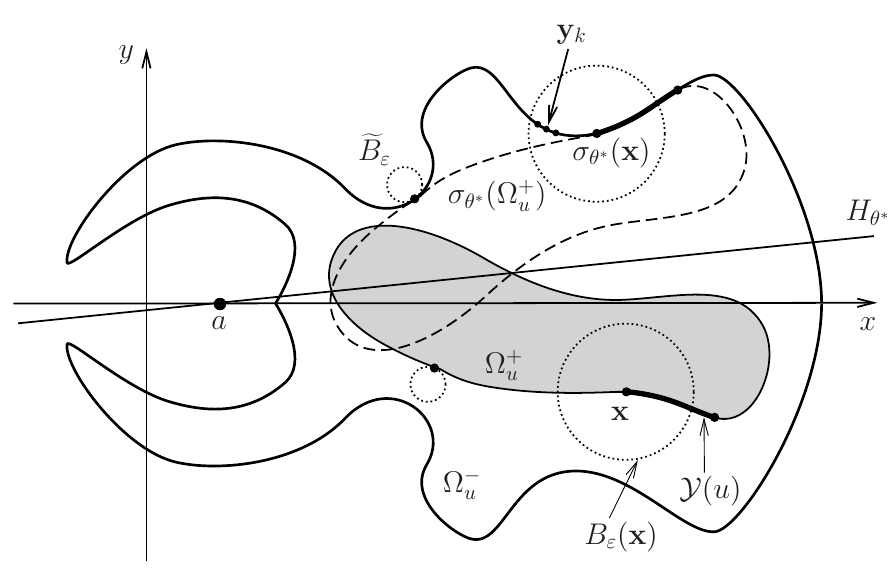}
\caption{Notations in the proof.}
\label{fig1}
\end{figure}

Let us now consider \textbf{Step II}.
We start with the following auxiliary result.
 \begin{lemma}\label{lemma_contact_set_properties}
	Let $(O_1)$, $(O_2)$, and $(O_4)$ be satisfied, and let $\mathbf{x} \in \mathcal{Y}(u)$. Then $|\nabla u(\mathbf{x})| > 0$ and hence $\mathbf{x} \in \partial \Omega^+_u \cap \partial \Omega^-_u$.
	Moreover, $\mathcal{Z}(u)$ is of class $C^2$ in a neighborhood of $\mathbf{x}$ and $\Omega^+_u$, $\Omega^-_u$ satisfy the interior ball condition at $\mathbf{x}$.
\end{lemma}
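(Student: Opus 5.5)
The strategy is to transport the local geometry at the boundary point $\sigma_{\theta^*}(\mathbf{x})$ back to $\mathbf{x}$ through the isometry $\sigma_{\theta^*}$, and then apply the Hopf boundary point lemma. Fix $\mathbf{x}\in\mathcal{Y}(u)$ and set $\mathbf{y}:=\sigma_{\theta^*}(\mathbf{x})\in\partial\Omega$. By $(O_4)$ there is a ball $B_r(\mathbf{w})\subset\mathbb{R}^N\setminus\overline{\Omega}$ with $\mathbf{y}\in\partial B_r(\mathbf{w})$. Reflecting, we get $B:=\sigma_{\theta^*}(B_r(\mathbf{w}))$, a ball with $\mathbf{x}\in\partial B$. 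The key geometric claim is that $B\cap\Omega\subset\Omega^-_u$, at least locally near $\mathbf{x}$, i.e.\ $u<0$ on $B\cap B_\varepsilon(\mathbf{x})$.

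To prove the claim, suppose $\mathbf{z}\in B\cap B_\varepsilon(\mathbf{x})$ and $\mathbf{z}\in\overline{\Omega^+_u}\cup\mathcal{Z}(u)$. Then $\sigma_{\theta^*}(\mathbf{z})\in B_r(\mathbf{w})$ lies outside $\overline\Omega$. Hence along the path $\theta\mapsto\sigma_\theta(\mathbf{z})$ for $\theta\in(0,\theta^*)$ the image crosses $\partial\Omega$ at some $\theta<\theta^*$, giving $\theta_{\mathbf{z}}<\theta^*$. This contradicts the definition \eqref{eq:theta-star}. The crossing indeed happens: $\sigma_0(\mathbf{z})\in\Omega$ by symmetry, and the path is continuous, so it must hit $\partial\Omega$ before reaching the exterior. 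Here $\varepsilon$ is chosen so that $B_\varepsilon(\mathbf{x})\subset\Omega$, which is possible since $\mathbf{x}\in\mathcal{Z}(u)$ and \eqref{eq:dist1} holds. Thus $u<0$ in $B\cap B_\varepsilon(\mathbf{x})$ and $u(\mathbf{x})=0$. Shrinking $B$ to a smaller ball internally tangent at $\mathbf{x}$, we may assume $B\subset B_\varepsilon(\mathbf{x})$ up to the point $\mathbf{x}$. So $\Omega^-_u$ satisfies the interior ball condition at $\mathbf{x}$.

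Next, $-u$ is a positive solution of $-\Delta_p(-u)=\lambda_2(-u)^{p-1}\ge0$ in $B$. It vanishes at the boundary point $\mathbf{x}$ and is $C^1$ up to $\mathbf{x}$, since $\mathbf{x}$ is interior to $\Omega$. The boundary point lemma of V\'azquez \cite[Theorem~5]{vazquez} then gives $\partial_\nu u(\mathbf{x})\neq0$, hence $|\nabla u(\mathbf{x})|>0$.

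Since $\nabla u(\mathbf{x})\neq0$ and $u\in C^1$, the implicit function theorem shows that near $\mathbf{x}$ the zero set is a $C^1$ hypersurface with $u>0$ on one side and $u<0$ on the other. Hence $\mathbf{x}\in\partial\Omega^+_u\cap\partial\Omega^-_u$. Moreover, near a point where $\nabla u\neq0$ the equation is uniformly elliptic and nondegenerate. Standard elliptic regularity therefore gives $u\in C^{2,\gamma}$ (indeed $C^\infty$) near $\mathbf{x}$, so $\mathcal{Z}(u)$ is a $C^2$ hypersurface there. A $C^2$ hypersurface admits interior tangent balls from both sides, which gives the interior ball condition for $\Omega^+_u$ as well.

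I expect the main obstacle to be the geometric step. One must make sure that the reflected exterior ball really lands in $\{u<0\}$ near $\mathbf{x}$. This requires a careful use of the definition of $\theta_{\mathbf{z}}$ through the first hitting angle along the orbit, together with the fact that the orbit starts inside $\Omega$.
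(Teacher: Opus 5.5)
Your proposal is correct and follows essentially the same route as the paper. Reflect the exterior ball at $\sigma_{\theta^*}(\mathbf{x})$ through $\sigma_{\theta^*}$; use the minimality of $\theta^*$, with the path $\theta\mapsto\sigma_\theta(\mathbf{z})$ starting at $\sigma_0(\mathbf{z})\in\Omega$, to get $u<0$ in the reflected ball; then apply V\'azquez's boundary point lemma, followed by nondegenerate elliptic regularity and the implicit function theorem. One small correction: your parenthetical claim that $u$ is $C^\infty$ near $\mathbf{x}$ is false in general, because $\lambda_2|u|^{p-2}u$ has only finite regularity across $\{u=0\}$ (e.g.\ it is merely $C^{0,p-1}$ for $p<2$); the $C^{2,\gamma}$ regularity you actually use is correct and suffices.
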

\begin{proof}
Let $\mathbf{x} \in \mathcal{Y}(u)$. 
In view of the exterior ball condition $(O_4)$, we can find a ball $\widetilde{B}_\varepsilon \subset \mathbb{R}^N \setminus \overline{\Omega}$ of radius $\varepsilon>0$ such that $\sigma_{\theta^*} (\mathbf{x}) \in \partial \widetilde{B}_\varepsilon \cap \partial \Omega$, see Figure~\ref{fig1}.
Taking $\varepsilon>0$ smaller if necessary, we get $\sigma_{\theta^*} (\widetilde{B}_\varepsilon) \subset \Omega$ and $\mathbf{x} \in \sigma_{\theta^*} (\partial {\widetilde{B}_\varepsilon})$. 
Since for any $\mathbf{y} \in \mathbb{R}^N$ the mapping $\theta \mapsto \sigma_{\theta}(\mathbf{y})$ is continuous, the definition of $\theta^*$ yields $u<0$ in $\sigma_{\theta^*} ({\widetilde{B}_\varepsilon})$. 
Since $u \in C^1(\Omega)$, we apply the boundary point lemma (see, e.g., \cite[Theorem~5]{vazquez}) in $\sigma_{\theta^*} ({\widetilde{B}_\varepsilon})$ and deduce that $|\nabla u(\mathbf{x})| > 0$.
This implies that the $p$-Laplacian is strictly elliptic in a neighborhood of $\mathbf{x}$, and hence $u$ is $C^2$-smooth in this neighborhood. 
Using the implicit function theorem, we conclude that $\mathcal{Z}(u)$ is a graph of a $C^{2}$-function in a neighborhood of $\mathbf{x}$, and the interior ball condition in $\Omega^\pm_u$ at $\mathbf{x}$ follows.
\end{proof}

\begin{lemma}\label{lemma_contradiction}
	Let $(O_1)-(O_4)$ be satisfied. Then the function $\alpha P_{\theta^*} u^+ + \beta P_{\theta^*} u^-$ given by Lemma~\ref{lemma_moving_polarization} is not an eigenfunction of \eqref{eq:D}.
\end{lemma}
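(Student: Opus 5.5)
Argue by contradiction: assume $w := \alpha P_{\theta^*} u^+ + \beta P_{\theta^*} u^-$ is an eigenfunction of \eqref{eq:D}, necessarily with $\lambda = \lambda_2$ by Lemma~\ref{lemma_moving_polarization}. Fix $\mathbf{x}_0 \in \partial \mathcal{Y}(u)$ and set $\mathbf{y}_0 := \sigma_{\theta^*}(\mathbf{x}_0) \in \partial\Omega$. The plan is to show that $w$ vanishes at $\mathbf{y}_0$ in a way incompatible with the boundary point lemma. Since $u(\mathbf{x}_0)=0$ and $u$ vanishes on $\partial\Omega$, the polarization gives $P_{\theta^*}u(\mathbf{y}_0)=0$. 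On the side of $H_{\theta^*}$ containing $\mathbf{y}_0$ near the boundary, $P_{\theta^*}u$ is a $\min$ or $\max$ of $u$ and $u\circ\sigma_{\theta^*}$. Which case occurs is settled by checking on which half-space $\Sigma^\pm_{\theta^*}$ the point $\mathbf{y}_0$ lies, using $(O_1)$, Lemma~\ref{lem:characterization:symmetry}, and the fact that $\theta^*\in(0,\pi)$. Near $\mathbf{y}_0$, $u<0$ and $u\circ\sigma_{\theta^*}$ takes both signs, because by Lemma~\ref{lemma_contact_set_properties} $\mathbf{x}_0$ lies on $\partial\Omega^+_u\cap\partial\Omega^-_u$ with $\nabla u(\mathbf{x}_0)\neq0$. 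So $w$ near $\mathbf{y}_0$ is built from $\alpha$- and $\beta$-scaled pieces of $u$ and $u\circ\sigma_{\theta^*}$.

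\textbf{Case 1: $\mathbf{y}_0$ is not a $C^{1,\delta}$ boundary point with interior ball.} The image $\sigma_{\theta^*}(\Omega^+_u)$ has an interior ball at $\mathbf{y}_0$ (Lemma~\ref{lemma_contact_set_properties}), and near $\mathbf{y}_0$ it lies inside $\Omega$. Hence $\Omega$ satisfies the interior ball condition at $\mathbf{y}_0$, and $(O_3)$ makes $\partial\Omega$ $C^{1,\delta}$ near $\mathbf{y}_0$. Then $w\in C^1$ up to the boundary near $\mathbf{y}_0$ by \cite{lieberman}. Moreover $\sigma_{\theta^*}(\mathcal{Z}(u))$ is a $C^2$ hypersurface through $\mathbf{y}_0$, tangent to $\partial\Omega$ there, and it lies inside $\overline\Omega$.

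\textbf{Case 2: comparison of gradients at $\mathbf{y}_0$.} Near $\mathbf{y}_0$, $w$ has a positive phase on $\sigma_{\theta^*}(\Omega_u^+)$ that touches $\partial\Omega$ at $\mathbf{y}_0$. Apply the boundary point lemma (\cite[Theorem~5]{vazquez}) to $w$ on an interior ball of $\sigma_{\theta^*}(\Omega_u^+)\cap\Omega$ tangent at $\mathbf{y}_0$; this gives $|\nabla w(\mathbf{y}_0)|>0$ with $w$ increasing inward along the normal. On the other hand, $w$ is $C^1$ at $\mathbf{y}_0$ and vanishes on $\partial\Omega$, so $\nabla w(\mathbf{y}_0)$ is normal to $\partial\Omega$. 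Its normal derivative from the negative side, coming from $u\le0$ near $\partial\Omega$, must be $\le0$ inward. I will try to make this explicit via the polarization formula. Near $\mathbf{y}_0$, in a neighbourhood intersected with $\Omega$, $w$ equals $\alpha\max\{u,u\circ\sigma\}$ on the positive part, hence $w\ge \beta u$ or $w\ge\alpha u\circ\sigma$ on suitable regions. Since $\mathbf{x}_0\in\partial\mathcal{Y}(u)$, points of $\mathcal{Z}(u)\setminus\mathcal{Y}(u)$ accumulate at $\mathbf{x}_0$. Their reflections then lie strictly inside $\Omega$, and since $\sigma_{\theta^*}(\mathcal{Z}(u))$ is a $C^2$ graph tangent to $\partial\Omega$, the positive region $\sigma_{\theta^*}(\Omega_u^+)$ approaches $\mathbf{y}_0$ in a way that forces $w$ to take both signs arbitrarily close to $\mathbf{y}_0$ along directions essentially tangent. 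This contradicts $C^1$ regularity with a nonzero normal gradient: such a $w$ has a fixed sign in a one-sided neighbourhood of $\mathbf{y}_0$ in $\Omega$, namely the sign of $\partial_\nu w$.

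\textbf{Main obstacle.} The delicate step is Case 2. One must rule out that $\sigma_{\theta^*}(\mathcal{Z}(u))$ coincides with $\partial\Omega$ near $\mathbf{y}_0$, and this is exactly why $\mathbf{x}_0$ is chosen in $\partial\mathcal{Y}(u)$ rather than in $\mathcal{Y}(u)$. One must also determine the correct sign configuration from the $\min$/$\max$ placement on $\Sigma^\pm_{\theta^*}$. If the sign-change argument proves awkward, I would fall back on a comparison principle for $w$ against $\alpha\, u\circ\sigma_{\theta^*}$ in the positive phase: both solve the same equation there with strict ellipticity near $\mathbf{y}_0$, since their gradients are nonzero. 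Their difference is then governed by a linear uniformly elliptic equation, and the Hopf lemma applied to it at $\mathbf{y}_0$ would yield the contradiction.
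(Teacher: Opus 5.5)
Your proposal is correct and is essentially the paper's argument: the interior ball at $\sigma_{\theta^*}(\mathbf{x}_0)$ comes from the reflected positive phase, then $(O_3)$ and Lieberman's boundary regularity apply, then the boundary point lemma at $\sigma_{\theta^*}(\mathbf{x}_0)$, and the choice $\mathbf{x}_0\in\partial\mathcal{Y}(u)$ produces negative values of $w$ arbitrarily close to $\sigma_{\theta^*}(\mathbf{x}_0)$. The paper phrases the final clash as $\partial w/\partial\nu(\mathbf{y}_k)\geq 0$ at points $\mathbf{y}_k\in\partial\Omega\setminus\sigma_{\theta^*}(\mathcal{Z}(u))$ tending to $\sigma_{\theta^*}(\mathbf{x}_0)$, combined with continuity of $\nabla w$ up to the boundary; this is equivalent to your fixed-sign-in-a-one-sided-neighbourhood formulation. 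The one step you leave implicit should be written out: take a point of the thin region between $\partial\Omega$ and $\sigma_{\theta^*}(\mathcal{Z}(u))$ near $\sigma_{\theta^*}(\mathbf{x}_0)$, which lies in $\Sigma^-_{\theta^*}$. There $u<0$, and the reflected point lies on the $\Omega^-_u$ side of $\mathcal{Z}(u)$ near $\mathbf{x}_0$, so $w$ equals $\beta$ times the maximum of two negative numbers and is negative.
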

\begin{proof}
Denote $w = P_{\theta^*} u$, for brevity. 
Take any $\mathbf{x} \in \partial \mathcal{Y}(u)$. In particular, we have $\sigma_{\theta^*}(\mathbf{x}) \in \partial \Omega$. 
Due to Lemma~\ref{lemma_contact_set_properties}, for any sufficiently small $\varepsilon>0$, 
the ball $B_\varepsilon(\mathbf{x})$ intersects $\Omega^+_u$, while $B_\varepsilon(\sigma_{\theta^*}(\mathbf{x}))$ does not (since we assume that $u<0$ in a neighborhood of $\partial \Omega$). 
Hence, considering any $\mathbf{y} \in B_\varepsilon(\mathbf{x}) \cap \Omega^+_u$, we see that $w(\mathbf{y}) = u(\sigma_{\theta^*}(\mathbf{y})) < 0$ and $w(\sigma_{\theta^*}(\mathbf{y})) = u(\mathbf{y}) > 0$. 
Moreover, by Lemma~\ref{lemma_contact_set_properties}, there exists a sufficiently small ball $\widetilde{B}_\delta \subset B_\varepsilon(\mathbf{x}) \cap \Omega^+_u$ such that $\mathbf{x} \in \partial \widetilde{B}_\delta$. Therefore, $w>0$ in $\sigma_{\theta^*}(\widetilde{B}_\delta)$ and $\Omega$ satisfies the interior ball condition at $\sigma_{\theta^*}(\mathbf{x})$. 
Due to $(O_3)$ and the discussion in Section~\ref{sec:intro}, we get $w \in C^{1}(\Omega \cup (\partial \Omega \cap B_\varepsilon(\sigma_{\theta^*}(\mathbf{x}))))$, and consequently the boundary point lemma (see, e.g., \cite[Theorem~5]{vazquez}) yields
\begin{equation}\label{eq:hopf>}
	\frac {\partial w}{\partial \nu}(\sigma_{\theta^*}(\mathbf{x})) < 0,
\end{equation}
where $\nu$ is the unit outward normal vector to $\partial \Omega$. 

On the other hand, since $\Omega$ is of class $C^{1,\delta}$ in a neighborhood of $\sigma_{\theta^*}(\mathbf{x})$ by $(O_3)$, $\sigma_{\theta^*}(\mathcal{Z}(u))$ is of class $C^2$ in a neighborhood of $\sigma_{\theta^*}(\mathbf{x})$ by Lemma~\ref{lemma_contact_set_properties}, and $\mathbf{x}$ belongs to $\partial \mathcal{Y}(u)$, we obtain the existence of a sequence 
\begin{equation}\label{eq:nonempty_intersection_1}
\{\mathbf{y}_k\} \subset \partial \Omega \setminus \sigma_{\theta^*}(\mathcal{Z}(u))
\quad \text{such that}~ \mathbf{y}_k \to \sigma_{\theta^*}(\mathbf{x})
\quad \text{as}~ k \to +\infty.
\end{equation}
By the definition of $\theta^*$, we have $u(\sigma_{\theta^*}(\mathbf{y}_k)) < 0$ for any $k$. 
Recalling also that $\{\mathbf{y}_k\} \subset \overline{\Omega^-_u}$ by the assumption, we conclude that $\{\mathbf{y}_k\} \subset \overline{\Omega^-_w}$. 
Therefore, since $w$ is $C^1$-regular up to $\partial \Omega \cap B_\varepsilon(\sigma_{\theta^*}(\mathbf{x}))$, we obtain
$$
\frac {\partial w}{\partial \nu}(\mathbf{y}_k) \geqslant 0 \quad \text{for any sufficiently large}~ k,
$$ 
which contradicts \eqref{eq:hopf>}.
\end{proof}

Combining Lemmas~\ref{lemma_moving_polarization} and \ref{lemma_contradiction}, we arrive at a contradiction, which implies that \eqref{eq:dist1} does not hold, and hence Theorem \ref{thm:main} is valid. 
\qed


\addcontentsline{toc}{section}{\refname}
	\small
	\begin{thebibliography}{99}
	
		\bibitem{allesandrini}
		Alessandrini, G. (1994). Nodal lines of eigenfunctions of the fixed membrane problem in general convex domains. Commentarii Mathematici Helvetici, 69(1), 142-154.
		\doi{10.1007/BF02564478}
		
		\bibitem{anane1987}
		Anane, A. (1987).
		Simplicit\'e et isolation de la premiere valeur propre du $p$-laplacien avec poids.
		Comptes Rendus de l'Acad\'emie des Sciences-Series I-Mathematics,
		305(16), 725-728.
		\url{http://gallica.bnf.fr/ark:/12148/bpt6k57447681/f27}
			
		\bibitem{baernstain}
		Baernstein II, A., Drasin, D., \& Laugesen, R. (2019). Symmetrization in analysis. Cambridge University Press.	
		\doi{10.1017/9781139020244}
			
		\bibitem{BartschWethWillem} 
		Bartsch, T., Weth, T., \& Willem, M. (2005). Partial symmetry of least energy nodal solutions to some variational problems. Journal d'Analyse Math\'ematique, 96(1), 1-18.
		\doi{10.1007/bf02787822}

		\bibitem{Bob1}
		Bobkov, V. (2014). Least energy nodal solutions for elliptic equations with indefinite nonlinearity. Electronic Journal of Qualitative Theory of Differential Equations, (56), 1-15.
		\doi{10.14232/ejqtde.2014.1.56}
		
		\bibitem{BK1}
		Bobkov, V., \& Kolonitskii, S. (2019). On a property of the nodal set of least energy sign-changing solutions for quasilinear elliptic equations. Proceedings of the Royal Society of Edinburgh Section A: Mathematics, 149(5), 1163-1173.
		\doi{10.1017/prm.2018.88}
		
		\bibitem{BK2x}
		Bobkov, V., \& Kolonitskii, S. B. (2024). Payne nodal set conjecture for the fractional $p$-Laplacian in Steiner symmetric domains. Zapiski Nauchnykh Seminarov POMI, 536(0), 96-125.
		\url{https://www.mathnet.ru/eng/znsl7506}
		
		\bibitem{brasco-franzina1}
		Brasco, L., \& Franzina, G. (2013). On the Hong-Krahn-Szego inequality for the $p$-Laplace operator. Manuscripta mathematica, 141(3), 537-557.
		\doi{10.1007/s00229-012-0582-x}
		
		\bibitem{brocksol}	
		Brock, F., \& Solynin, A. (2000). An approach to symmetrization via polarization. Transactions of the American Mathematical Society, 352(4), 1759-1796.
		\doi{10.1090/s0002-9947-99-02558-1}
	
		\bibitem{CuestaFucik}
		Cuesta, M., De Figueiredo, D., \& Gossez, J. P. (1999). The beginning of the Fu\v{c}ik spectrum for the $p$-Laplacian. Journal of Differential Equations, 159(1), 212-238.
		\doi{10.1006/jdeq.1999.3645}
		
		\bibitem{DGSH}
		Dahne, J., G\'omez-Serrano, J., \& Hou, K. (2021). A counterexample to Payne's nodal line conjecture with few holes. Communications in Nonlinear Science and Numerical Simulation, 103, 105957.
		\doi{10.1016/j.cnsns.2021.105957}
		
		\bibitem{damascelliNodal}
		Damascelli, L. (2000). On the nodal set of the second eigenfunction of the Laplacian in symmetric domains in $\mathbb{R}^{N}$. Atti della Accademia Nazionale dei Lincei. Classe di Scienze Fisiche, Matematiche e Naturali. Rendiconti Lincei. Matematica e Applicazioni, 11(3), 175-181.
		\url{http://eudml.org/doc/252373}		
		
		\bibitem{Dib}
		DiBenedetto, E. (1983). $C^{1+\alpha}$ local regularity of weak solutions of degenerate elliptic equations. Nonlinear Analysis: Theory, Methods \& Applications, 7(8), 827-850.
		\doi{10.1016/0362-546X(83)90061-5}
		
		\bibitem{drabekrobinson}
		Dr\'abek, P., \& Robinson, S. B. (1999). Resonance problems for the $p$-Laplacian. Journal of Functional Analysis, 169(1), 189-200.
		\doi{10.1006/jfan.1999.3501}

		\bibitem{dubinin}
		Dubinin, V. N., (2014). Condenser capacities and symmetrization in geometric function theory. Springer Basel.
		\doi{10.1007/978-3-0348-0843-9}
		
		\bibitem{fournais}
		Fournais, S. (2001). The nodal surface of the second eigenfunction of the Laplacian in $\mathbb{R}^D$ can be closed. Journal of Differential Equations, 173(1), 145-159.
		\doi{10.1006/jdeq.2000.3868}
					
		\bibitem{freitas-krej1}
		Freitas, P., \& Krej\v{c}i\v{r}\'ik, D. (2007). Unbounded planar domains whose second nodal line does not touch the boundary. Mathematical Research Letters, 14(1), 107-111.
		\doi{10.4310/MRL.2007.v14.n1.a9}
		
		\bibitem{FK}
		Freitas, P., \& Krej\v{c}i\v{r}\'ik, D. (2008). Location of the nodal set for thin curved tubes. Indiana University Mathematics Journal, 57(1), 343-375.
		\doi{10.1512/iumj.2008.57.3170}
		
		\bibitem{freitas-leylekian}
		Freitas, P., \& Leylekian, R. (2025). Payne's nodal line conjecture fails on doubly-connected planar domains. \href{https://arxiv.org/abs/2510.24436}{arXiv:2510.24436}
		
		\bibitem{jerison1}
		Jerison, D. (1995). The first nodal set of a convex domain. In Essays on Fourier analysis in honor of Elias M. Stein (Vol. 42, pp. 225-249). Princeton University Press Princeton, NJ.
		\doi{10.1515/9781400852949.225}
	
		\bibitem{HoffmannOstenhof}
		Hoffmann-Ostenhof, M., Hoffmann-Ostenhof, T., \& Nadirashvili, N. (1997). The nodal line of the second eigenfunction of the Laplacian in $\mathbb{R}^2$ can be closed. Duke Mathematical Journal, 90(3), 631-640.
		\doi{10.1215/S0012-7094-97-09017-7}
	
		\bibitem{kawohl-lind}
		Kawohl, B., \& Lindqvist, P. (2006). Positive eigenfunctions for the $p$-Laplace operator revisited. Analysis - International Mathematical Journal of Analysis and its Application, 26(4), 545.
		\doi{10.1524/anly.2006.26.4.545}
		
		\bibitem{kennedy}
		Kennedy, J. B. (2013). Closed nodal surfaces for simply connected domains in higher dimensions. Indiana University Mathematics Journal, 62(3), 785-798.
		\doi{10.1512/iumj.2013.62.4975}
		
		\bibitem{kiwan}
		Kiwan, R. (2018). On the nodal set of a second Dirichlet eigenfunction in a doubly connected domain. Annales de la Facult\'e des sciences de Toulouse: Math\'ematiques, 27(4), 863-873.
		\doi{10.5802/afst.1585}

		\bibitem{krej-tusek}
		Krej\v{c}i\v{r}\'ik, D., \& Tu\v{s}ek, M. (2015). 
		Nodal sets of thin curved layers. Journal of Differential Equations, 258(2), 281-301.
		\doi{10.1016/j.jde.2014.09.009}
		
		\bibitem{lieberman}
		Lieberman, G. M. (1988). Boundary regularity for solutions of degenerate elliptic equations. Nonlinear Analysis: Theory, Methods \& Applications, 12(11), 1203-1219. 
		\doi{10.1016/0362-546x(88)90053-3}
		
		\bibitem{melas}
		Melas, A. D. (1992). On the nodal line of the second eigenfunction of the Laplacian in $\mathbb{R}^2$. Journal of Differential Geometry, 35(1), 255-263.
		\doi{10.4310/jdg/1214447811}
				
		\bibitem{mukh-saha}
		Mukherjee, M., \& Saha, S. (2025). On the effects of small perturbation on low energy Laplace eigenfunctions. Journal of Spectral Theory, 15(3), 1045-1087.
		\doi{10.4171/JST/570}
		
		\bibitem{payne1967}
		Payne, L. E. (1967). Isoperimetric inequalities and their applications. SIAM Review, 9(3), 453-488.
		\doi{10.1137/1009070}
		
		\bibitem{payne1973}
		Payne, L. E. (1973). On two conjectures in the fixed membrane eigenvalue problem. Zeitschrift f\"ur angewandte Mathematik und Physik, 24(5), 721-729.
		\doi{10.1007/BF01597076}

		\bibitem{Polya1}
		P\'olya, G. (1950). Sur la sym\'etrisation circulaire. Comptes Rendus Hebdomadaires des Seances de L Academie Des Sciences, 230(1), 25-27.
		
		
		\bibitem{putter}
		P\"utter, R. (1990). On the nodal lines of second eigenfunctions of the fixed membrane problem. Commentarii Mathematici Helvetici, 65(1), 96-103.
		\doi{10.1007/BF02566596}
		
		\bibitem{sarswat}
		Sarswat, A. (2014). On the nodal line of a second eigenfunction of the Laplacian-Dirichlet in some annular domains with dihedral symmetry. 
		\href{https://arxiv.org/abs/1411.0221}{arXiv:1411.0221}.
		
		\bibitem{sarvas}
		Sarvas, J. (1973). Symmetrization of condensers in $n$-space. Annales Fennici Mathematici, (522).
		\doi{10.5186/aasfm.1973.522}
		
		\bibitem{YG}
		Yang, D. H., \& Guo, B. Z. (2013). On nodal line of the second eigenfunction of the Laplacian over concave domains in $\mathbb{R}^2$. Journal of Systems Science and Complexity, 26(3), 483-488.
		\doi{10.1007/s11424-013-1175-9}

		\bibitem{vazquez}
		V\'azquez, J. L. (1984). A strong maximum principle for some quasilinear elliptic equations. Applied Mathematics and Optimization, 12(1), 191-202.
		\doi{10.1007/bf01449041}
		
	\end{thebibliography}
\end{document}